\documentclass[12pt,english]{article}
\usepackage{mathptmx}
\usepackage{geometry}
\usepackage{xcolor}
\usepackage{array}
\usepackage{bbding}
\usepackage{multirow}
\usepackage[fleqn]{amsmath}
\usepackage{amssymb}
\usepackage{amsthm}
\usepackage{graphicx}
\usepackage{natbib}
\usepackage{lscape}
\usepackage[hyphens]{url}
\usepackage[hidelinks]{hyperref}
\usepackage{comment}
\usepackage{bm}
\usepackage[title]{appendix}
\usepackage{longtable}
\usepackage{mathtools}
\usepackage{chngcntr}
\usepackage{authblk}
\usepackage{babel}
\usepackage{dsfont}
\usepackage{subcaption}
\usepackage{fancyhdr}
\usepackage{cleveref}
\usepackage{abstract}

\date{}

\usepackage[mathlines, pagewise]{lineno}
\usepackage{etoolbox} 

\newtheorem{theorem}{Theorem}

\usepackage{tikz}
\usepackage{enumerate}
\usepackage{graphicx}
\usepackage{subcaption}
\usepackage{verbatim}
\usepackage{comment}
\usetikzlibrary{decorations.pathreplacing, arrows.meta}

\usepackage[textsize=tiny]{todonotes}

\newtheorem{lemma}{Lemma}
\newtheorem{corollary}{Corollary}
\title{The Increasing Gap Dynamics in a General Spatial Matching Model}

\newcommand{\PP}{\mathbb{P}}
\author[a,$\ast$]{Andres Fielbaum}
\author[b,c]{Roberto Cominetti}
\author[d]{Jose Correa}
\affil[a]{School of Civil Engineering, University of Sydney, Darlington 2008, NSW, Australia}
\affil[b]{
Institute for Mathematical and Computational Engineering, Faculty of  Mathematics and School of  Engineering, Pontificia Universidad Cat\'olica, Santiago, Chile}
\affil[c]{Departament of Industrial and Systems Engineering, Faculty of  Engineering, Pontificia Universidad Cat\'olica, Santiago, Chile}
\affil[d]{Department of Industrial Engineering, Universidad de Chile, Santiago, Chile}

\begin{document}
\maketitle

\noindent{\bf Abstract:} We study a representation of a problem that appears in numerous transport systems: $N$ servers distributed over a given space (e.g., cars on an urban network), receive random requests from arriving users who get 
assigned to the closest server, after which this server is replaced by a new one at a random location. We show that this creates a negative feedback loop, which we call \textit{Increasing Gap Dynamics} (IGD): when a server is assigned a spatial gap forms, which is more likely to attract new users that further widen the gap. 

The simplest version of our model is a one-dimensional circle, for which we derive analytical results showing that the system converges to an inefficient equilibrium, worse than both balanced and fully random distributions of servers. We prove that an optimal assignment policy always matches the user to one of its two neighbouring servers so that long gaps tend to widen. Hence, the IGD persists even when assigning optimally rather than greedily. In two dimensions, the appearance of the IGD is illustrated through simulations on a square region. Finally, simulations of a proper ride-hailing system using real data from Manhattan confirms that the IGD arises and that it is responsible for the appearance of the well-known Wild Goose Chase.

\vspace{3ex}

\noindent{\bf Keywords:} Increasing gap dynamics, spatial matching, wild goose chase, ride-hailing, greedy assignment


\section{Introduction: A Negative Feedback in Spatial Matching}\label{sec:INTRO}
Consider the following generic situation. There is a spatial domain, like a graph or an Euclidean region, and users who appear randomly in space at a roughly constant rate. These users must be matched with a server; the closer, the better.  Concurrently, servers also appear randomly in space after finishing service for a previous user, potentially at a different location from where they started. This informal and abstract formulation can represent various transport systems, such as: 

\begin{itemize}
    \item Ride-hailing: Servers are drivers who appear upon completion of a ride and become available again.
    \item Dockless bike-sharing: Servers are the bicycles.
    \item On-street parking when delivering: Users are the households where items must be delivered, and servers are parking lots where the deliverer will park.
\end{itemize}

What is the evolution of these types of systems? Can we have a general model representing all of them? In particular, in all the examples above, it has been found that situations like the \textit{Wild Goose Chase} (WGC) can occur \citep{xu2024economic,castillo2017surge,ouyang2023measurement,emami2024integrated,li2025wild,fielbaum2025coordination}. The WGC refers to a situation where the spatial distance between matched users and servers becomes inefficiently large. The WGC is undesirable but can be an equilibrium because the long distance implies that servers remain occupied for long periods, as the server is busy since the moment the users are matched, thereby reducing the number of available servers. We refer to this negative feedback cycle between the number of idle servers and serving time as the \textit{WGC cycle}. While this bad equilibrium has been clearly reported in the literature, the dynamics that explain why WGC appears in the first place have remained underexplored.

In this paper, we study the spatial evolution of this generic system. We propose a simple Markovian model and provide several theoretical and experimental results, as well as a few open questions. We also explain the mechanisms that make the WGC to appear. The reason is actually quite simple and rests on a fundamental limitation, namely the impossibility of keeping a balanced distribution of the servers (unless the system can be intervened and thus the servers rebalanced). Let us first explain it intuitively using Fig. \ref{fig:ExampleIntro}:

\begin{enumerate}[(a)] 
    \item  A circle with servers (cars) uniformly distributed, where we highlight the \textit{dominance zone} of every server $s$. That is, the region where $s$ is the nearest compared to others so that a user appearing there would be assigned to $s$ assuming a greedy rule.
    \item A user appears, and it is assigned to the closest car.
    \item The car is now taken so the two surrounding dominance zones are merged into a larger zone.
\end{enumerate}

\begin{figure}[ht]
    \centering
    \includegraphics[width=0.8\linewidth]{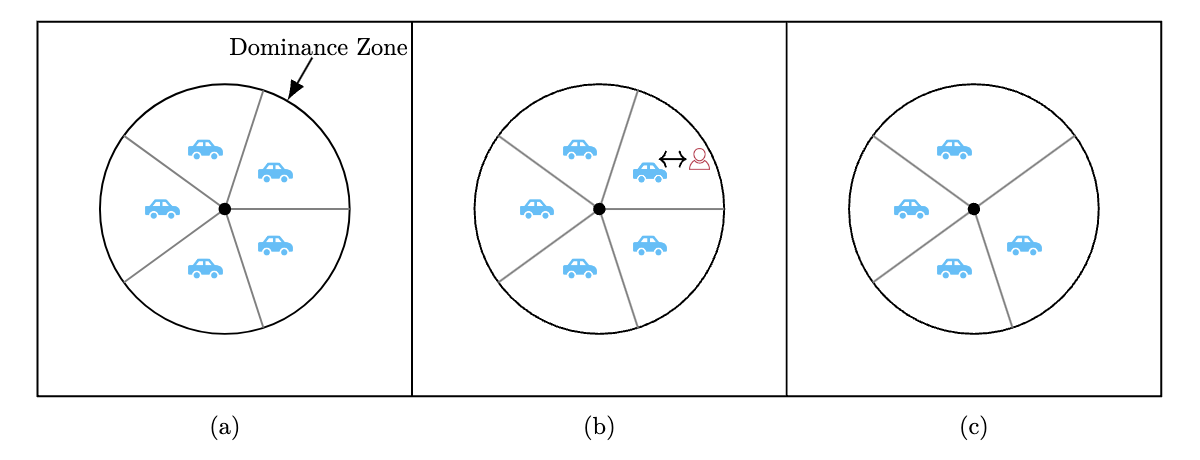}
    \caption{An example of the dynamic that prevents servers from remaining balanced in a spatial setting.}
    \label{fig:ExampleIntro}
\end{figure}

Even when a new car appears, the size of the dominance zones will now be heterogeneous. The larger areas will have a greater probability of receiving the next user, and when this happens, there are two negative effects: 

\begin{enumerate}
    \item The user will, on average, face an increased cost, as measured by the distance to its closest server.
    \item The large area will increase even further, making the problem worse for future users. That is, there is \textbf{a negative feedback loop}, where the gaps created when one server disappears (gets assigned) tend to become bigger and bigger. We call this the \textit{Increasing Gaps Dynamics} (IGD).
\end{enumerate}

The IGD keeps worsening the servers' distribution until a probabilistic equilibrium is reached. As will be shown in the subsequent sections, this equilibrium is not only worse than a uniform distribution of servers but also worse than a fully random one.

We study and demonstrate these dynamics using three different techniques. First, we focus on what could be considered to be the simplest possible setting, namely a one dimensional circle, where we provide a number of theoretical results. We then consider a unit square in two dimensions, where analytical results seem beyond reach but where we present visualisations to further develop the intuition. Finally, we include a set of results of microsimulations using a real-world dataset from Manhattan (New York),  showing that these dynamics also hold under realistic configurations. To enhance readability, the rest of this paper assumes we are describing ride-hailing, but the model remains general.

The rest of the paper is structured as follows. Section \ref{sec:LitReview} gives an overview of previous studies related to this one. Section \ref{sec:Model} introduces the formal mathematical model. Section \ref{sec:1d} provides a detailed theoretical analysis of the dynamics on the one-dimensional circle. Section \ref{sec:Square} focuses on the unit square and provides visualisations and a video to strengthen the understanding of these dynamics. Section \ref{sec:Simulations} shows the results of several simulations on a regular grid and on Manhattan. Section \ref{Sec:SummaryDynamics} consolidates the two relevant dynamics that take place in these systems: IGD and WGC, through a schematic explanation of the feedback loops involved. Finally, section \ref{sec:Conclusions} concludes and suggests directions for further research.

\section{Related works}\label{sec:LitReview}
Our paper describes the evolution of dynamic spatial matching models. Therefore, two lines of research are the most relevant to review. First, the challenges of making decisions with partial information in these systems, and second, their analysis at equilibrium.

\subsection{Forward-looking decisions in on-demand mobility}
App-based on-demand mobility has become very popular in recent years. Different from traditional on-demand modes (e.g. taxis), the apps have the ability to collect in real time the information of all drivers and users, to then decide which is the best assignment.\footnote{Note that drivers are often independent decision-makers \citep{ashkrof2022ride,ashkrof2024relocation}. Nevertheless, most of the papers reviewed here assume that the platform is able to decide centrally.} However, the information they collect is only about the current users and drivers; and the optimal decision contingent on that information might differ from what would be decided if all the information were known beforehand \citep{berbeglia_dynamic_2010}.

Researchers have proposed various ways to deal with this challenge. First, there are several methods to decide how to \textit{rebalance} the fleet, i.e., to instruct the idle vehicles to relocate at places that are expected to be undersupplied. \cite{guo2021robust,yang2022learning} propose learning-based methods for rebalancing in ride-hailing systems, while \cite{pan2019deep} also develops machine learning to rebalance dockless bike-sharing, a type of system where empirical studies show that only some stations can achieve ``self-balancing'' \citep{hua2025can}.

An interesting difference emerges when several passengers can share the vehicle at the same time. In this case, it might be optimal to follow a route that is not the shortest path, to try to find more passengers along the way; additionally, the decision of whom to pick up and drop off first can also include anticipatory techniques. These ideas are the focus of the papers by \cite{alonso2017predictive,fielbaum2022anticipatory}.

Instead of instructing the vehicles to move in a certain way, similar behaviours can be induced through pricing. In ride-hailing, this is known as \textit{surge pricing}, namely, when zones that are undersupplied face an increase in their fares in order to attract more drivers and deter some users. The dynamics and equilibria have been studied by \cite{besbes2021surge,hu2022surge,yan2020dynamic}. Similar ideas have been explored in car-sharing \citep{zakharenko2023pricing} and bike-sharing \citep{emami2024integrated}.

The uncertainty about the future state of the system not only affects its efficiency: in these systems, \textit{unreliability} is often a critical issue, as users cannot know in advance the characteristics of their assignment. In the case of ride-hailing, \cite{bansal2020impact} estimate that the demand could increase by 10\% if waiting times could be predicted accurately; in ride-pooling, the routes can adapt dynamically to accommodate to the new requests, and this factor deters a more widespread adoption of these systems \citep{fielbaum2020unreliability,zhang2024barrier,alonso2020value}; regarding bike-sharing, in dockless systems the time until finding an available bicycle can be significantly less predictable than when systems are station-based \citep{beauvoir2020unreliability}. 

\subsection{Spatial equilibrium models}
A different stream of research, like ours, proposes simplified models that allow for analytical treatment. Many of these works focus on identifying the types of equilibria that can arise. As a result, the models are typically \textit{static}, describing what the equilibrium looks like rather than how it is reached.

The most relevant model in this context was developed by \cite{castillo2017surge}, who model the equilibrium between supply and demand. This framework was further extended by \cite{yan2020dynamic} and \cite{fielbaum2024idle}. This model includes the analysis of the relationship between the number of idle vehicles and the waiting time in ride-hailing, but can be easily applied to other similar modes, and it explains why the Wild Goose Chase can be an equilibrium. In this line of models, the spatial dimension is not considered explicitly, or equivalently, the system is assumed to be homogeneous in space.

Other papers have developed explicit physical models to analyse either the matching or the pickup times in the case of ride-hailing \citep{zhang2019efficiency,zhang2025walking,zha2016economic,yang2020optimizing,li2024aggregate}. These papers have proved useful in developing a theoretical understanding of several aspects, such as scale economies, comparison with traditional taxis, passengers' competition for vehicles, and the role of walking.
Finally, the papers by \cite{kanoria2021dynamic,balkanski2023power} are closer to ours as they develop mathematical models for spatial matching. Details are discussed later when introducing our model.

\section{A General Model}\label{sec:Model}

Let $(X,d)$ be a metric space (i.e., a set $X$ endowed with a distance function $d$), and $S^1=(s^1_1,\ldots,s^1_N) \in X^N$ a tuple representing the initial location of $N$ vehicles in $X$. Consider two random sequences $(x_t)_{t\geq 1}$ and $ (y_t)_{t\geq 1}$ in $X$, representing the arrivals of users and new vehicles, with   i.i.d. distributions $Q$ and $P$ respectively. The process $(S^t)_{t\ge 1}$ goes as follows. At every stage $t \in \mathbb{N}$:
\begin{enumerate}
    \item A newly arrived user at $x_t\sim Q$ is \textit{assigned} to the closest vehicle, i.e., to $k^*=\text{argmin}_{k=1,\ldots,N}~d(x_t,s^t_k)$, facing a \textit{cost} of $c_t=d(x_t,s^t_{k^*})$. This implies the assignment is done greedily, an assumption that will be discussed later in section \ref{sec:Greedy?}.
    \item The assigned vehicle at $s^t_{k^*}$ is no longer idle and is replaced by a newly arrived vehicle at the random location $y_t\sim P$, so that 
     the new vector of vehicle positions is $S^{t+1}=(s^t_1,\ldots,s^t_{k^*-1}, y_t,s^t_{k^*+1},\ldots,s^t_N)$.
\end{enumerate}

\noindent Typical examples of $X$ are the nodes of a graph with $d$ the length of the shortest paths, or a subset of $\mathbb{R}$ or $\mathbb{R}^2$ (as in Fig. \ref{fig:ExampleIntro}) using Euclidean distance. The case of $X=[0,1]^d$ with Euclidean distance has been previously proposed by \cite{kanoria2021dynamic} under different assignment rules, focusing on the asymptotic values of $c_t$. We remark that in this model, the WGC cycle cannot appear: the new vehicle appears immediately once the previous one gets assigned, keeping the size of the available fleet constant. By these means, for now we focus solely on the IGD.

The sequence $(S^t)_{t \geq 1}$ describes the evolution of the positions of the $N$ drivers and is an homogeneous Markov chain with state space $X^N$.  This state space is finite if $X$ is finite, countable if $X$ is countably infinite, and uncountable if $X$ is uncountable. Throughout the paper we consider both discrete (finite) and continuous (uncountable) cases and clarify when needed. For now, because Markov chains are much simpler when the state space is finite or countable, we include this extra assumption for $X$. This is largely harmless from an interpretation perspective, as any transport scenario will ultimately involve a finite  (potentially discretised) set of feasible locations. Nevertheless, in section \ref{sec:1d} we analyse the one-dimensional circle considering both discrete and continuous cases, whereas in section \ref{sec:Square} we present visualisations on the continuous two-dimensional square. Also, in the Appendix we provide some theoretical support for models with a continuous and compact ground set $X\subseteq\mathbb{R}^d$.

In this discrete scenario for $X$, we assume that users and drivers may appear anywhere so that for every \ location $u \in X$ we have \ $Q(x_t=u)>0$ and $P(y_t=u)>0$. 
This implies that the Markov chain is:
\begin{enumerate}
    \item \textit{Irreducible}: This property means that if $A,B\in X^N$ are two $N$-tuples in the state space $X^N$, there exists some integer $k$ such that $\mathbb{P}(S^{t+k}=B|S^t=A)>0$. In other words, regardless of the current state of the system, any other state can be reached in finite time. In our setting, this can always be achieved in $N$ or less steps. Indeed, it suffices that first, in iteration $t$, the new user $x_t$ appears exactly in $A_1$, and the new driver $y_t$ appears in $B_1$; this would imply that the first element of $A$ would be swapped by the first element of $B$. In the subsequent $N-1$ iterations, the same would happen but with the remaining coordinates one by one, until reaching the vector $B$ at iteration $t+N$.
    \item \textit{Aperiodic}: This follows from the fact that for all $A \in X^N$ we have $\mathbb{P}(S^{t+1}=A|S^t=A)>0$. The latter is true by a similar argument as above: it suffices that both the user $x_t$ and driver $y_t$ appear where the first driver used to be, i.e. $x_t=y_t=A_1$.
\end{enumerate}

As a direct application of the  convergence theorem for finite ergodic  Markov Chains, these two properties imply the following lemma.
\begin{lemma}\label{Lemma:Ergodic}
    If $X$ is finite, the chain has a unique  invariant probability measure $\pi$ such that $\sum_{A \in X^N} \pi_A =1$ and  regardless of the starting point $S^1=s$, we have 
$$\lim_{t\rightarrow\infty} \mathbb{P}(S^t=A|S^1=s)=\pi_A.$$
\end{lemma}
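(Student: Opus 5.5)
The plan is to apply the standard convergence theorem for finite, irreducible, aperiodic Markov chains to $(S^t)_{t\ge1}$, so the work is to check its hypotheses carefully.

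\emph{Homogeneous Markov chain on a finite space.} The state space is $X^N$, which is finite because $X$ is. $S^{t+1}$ is a deterministic function of $S^t$, $x_t$ and $y_t$. Here $x_t$ and $y_t$ are i.i.d.\ and independent of the past, so the transition kernel
\[
p(A,B)=\sum_{u,v\in X} Q(u)\,P(v)\,\mathbf{1}\{\Phi(A,u,v)=B\}
\]
does not depend on $t$. In this formula, $\Phi(A,u,v)$ replaces the closest vehicle to $u$ in $A$ by $v$. To make $\Phi$ well defined, ties in the argmin are broken by a fixed deterministic rule, say the smallest index.

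\emph{Irreducibility.} Fix states $A,B$. Consider the event that for $i=1,\dots,N$ the $i$-th user appears at the current location of coordinate $i$ and the new vehicle appears at $B_i$. Under the smallest-index tie-breaking, the user at that location is assigned to the vehicle at distance $0$ with smallest index. This need not be coordinate $i$ if several vehicles coincide. To avoid this issue, I would work with the positions as a multiset, i.e.\ up to permutation of labels, or simply note the following.

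Placing a user at any occupied location removes \emph{some} vehicle there and adds one at $B_i$. After $N$ steps, processing the original vehicles in turn, every original vehicle has been replaced, and the multiset of positions is $\{B_1,\dots,B_N\}$. Each step has probability at least $\min_u Q(u)\min_v P(v)>0$.

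If the labelled ordering matters, the chain should be regarded on the quotient of label-permutations. The limit statement is unaffected because the cost process only depends on positions. Otherwise, the coordinate-by-coordinate argument in the text works directly whenever the relevant locations are distinct, and repeated locations are handled by choosing the order of replacements appropriately. This tie bookkeeping is the only delicate step.

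\emph{Aperiodicity.} For every $A$, the event $x_t=y_t=A_1$ has positive probability and returns the chain to $A$, so $p(A,A)>0$. Hence every state has period $1$.

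\emph{Conclusion.} The chain is a finite, irreducible, aperiodic Markov chain, i.e.\ it is ergodic. The convergence theorem then gives a unique stationary distribution $\pi$ with $\sum_A\pi_A=1$. It also gives $\PP(S^t=A\mid S^1=s)\to\pi_A$ for every initial state $s$.
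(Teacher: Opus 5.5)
\textbf{Overall.} Your route is the paper's. You check that $(S^t)$ is a finite homogeneous chain, get irreducibility by replacing vehicles one at a time and aperiodicity from the self-loop $x_t=y_t=A_1$, and then invoke the convergence theorem. Your aperiodicity step holds under any tie-breaking rule.

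\textbf{Where the gap is.} You are right that the paper's $N$-step coordinate-by-coordinate argument fails when positions coincide. With smallest-index ties, $A=(a,a)$ and $B=(a,b)$, the second user placed at $a$ is served by vehicle $1$, not vehicle $2$, so the process ends at $(b,a)$. Your repair, however, leaves the gap open. The lemma concerns the labelled chain on $X^N$, so irreducibility of the multiset process proves a different statement; the remark that costs depend only on positions bears on Theorem~\ref{thm:CostsConverge}, not on this lemma. Worse, under the smallest-index rule you fixed, the multiset process is not even Markov, because a user equidistant from vehicles at two different locations is served according to labels. For example, on the circle with points $0,1/4,1/2,3/4$, take vehicles at $0$ and $1/2$ and a user at $1/4$. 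For the labelled chain you only assert that repeated locations are handled ``by choosing the order of replacements appropriately'', and that is exactly the step that needs an argument.

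\textbf{A simple fix.} Apply your own multiset computation to a constant target $(a,\dots,a)$. Starting from $S^1=s=A$, before step $i$ the positions are $A_i,\dots,A_N$ together with $i-1$ copies of $a$. A user at $A_i$ is served by some vehicle located at $A_i$, whatever the tie-breaking rule, and a new vehicle at $a$ leaves $A_{i+1},\dots,A_N$ plus $i$ copies of $a$. Only one labelled state has all positions equal to $a$, so
\[
\mathbb{P}\bigl(S^{N+1}=(a,\dots,a)\mid S^1=s\bigr)\ge\bigl(\min_u Q(u)\,P(a)\bigr)^N>0
\]
uniformly in $s$. Combined with the self-loop at $(a,\dots,a)$, this means the finite chain has a single recurrent class and that class is aperiodic. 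That is all the convergence theorem needs to give a unique $\pi$ and $\mathbb{P}(S^t=A\mid S^1=s)\to\pi_A$ for every $s$.

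\textbf{Irreducibility is not needed, and can fail.} Full irreducibility can fail for some state-dependent tie-breaking rules. For $N=2$ and $X=\{a,b\}$, serving vehicle $1$ at $(a,a)$ and vehicle $2$ at $(b,b)$ makes $(a,b)$ unreachable from $(b,a)$.

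\textbf{If you still want irreducibility under the smallest-index rule.} Fix coordinates in the order $N,N-1,\dots,1$. Before fixing $i$, move every vehicle $j<i$ that shares vehicle $i$'s location to some other point, in increasing order of $j$. Each such $j$ is then the smallest index at that location and so is the one served. Afterwards a user at vehicle $i$'s location serves vehicle $i$. Already-fixed vehicles are never served again, since every user is placed where a lower-indexed vehicle sits. This takes up to $N(N+1)/2$ steps rather than $N$.
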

\noindent Note that the final equilibrium is not a specific location tuple of the drivers, but probabilities over all possible location tuples. We refer to this ergodic equilibrium as the \textit{IGD Equilibrium}. Using Lemma \ref{Lemma:Ergodic}, standard results from Markov Decission Processes imply the following theorem \citep{puterman2014markov}:

\begin{theorem}\label{thm:CostsConverge}
$\mathbb{E}(c_t)$ converges as $t \rightarrow \infty$.     
\end{theorem}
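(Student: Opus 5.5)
We prove this in the finite setting of Lemma \ref{Lemma:Ergodic}, writing $\mathbb{E}(c_t)$ as a weighted average over states of a fixed bounded cost function and then applying the convergence of $\mathbb{P}(S^t=A\mid S^1=s)$.

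\textbf{Step 1 (expected cost given the state).} For each state $A\in X^N$ define
\[
g(A)=\sum_{u\in X} Q(x_t=u)\,\min_{k=1,\ldots,N} d(u,A_k),
\]
which is the expected cost of the next user when the current configuration is $A$. Since $x_t$ is drawn independently of $S^t$, conditioning on $S^t$ gives $\mathbb{E}(c_t\mid S^t=A)=g(A)$. Ties in the argmin do not matter, because the cost is the minimal distance whichever vehicle is chosen.

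\textbf{Step 2 (total expectation).} Conditioning on the state,
\[
\mathbb{E}(c_t)=\sum_{A\in X^N}\mathbb{P}(S^t=A\mid S^1=s)\,g(A).
\]
Each $g(A)$ is finite and bounded by the diameter $\max_{u,v\in X} d(u,v)<\infty$, because $X$ is finite.

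\textbf{Step 3 (passing to the limit).} By Lemma \ref{Lemma:Ergodic}, every probability $\mathbb{P}(S^t=A\mid S^1=s)$ converges to $\pi_A$. The sum in Step 2 is finite, so we may pass to the limit term by term:
\[
\lim_{t\to\infty}\mathbb{E}(c_t)=\sum_{A\in X^N}\pi_A\, g(A).
\]
This shows the limit exists, and also identifies it as the average cost under the IGD equilibrium. The limit does not depend on the initial state $s$. If $S^1$ is random rather than fixed, the same conclusion follows by averaging over the distribution of $S^1$.

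No step is a real obstacle. The only point needing care is that $c_t$ depends on $x_t$ as well as on the state, so it is not a function of $S^t$ alone. Step 1 handles this: since $x_t$ is independent of the past, the cost reduces to the state function $g$.

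For the countable or compact continuous cases, the same argument applies once two things hold. First, the chain must converge in total variation (or weakly) to its invariant measure. Second, $g$ must be bounded (or bounded and continuous). Then $\mathbb{E}(c_t)=\mathbb{E}\,g(S^t)\to\int g\,d\pi$. In those cases, establishing the convergence of the chain would be the main work, for example via a Doeblin minorization, which is plausible here because $P$ has full support.
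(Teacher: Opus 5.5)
Your proof is correct and is essentially the paper's argument made explicit. The paper simply invokes Lemma~\ref{Lemma:Ergodic} together with standard Markov-chain/MDP facts, and these amount to your observation: since $x_t$ is independent of $S^t$, we have $\mathbb{E}(c_t)=\sum_A \mathbb{P}(S^t=A\mid S^1=s)\,g(A)$, a finite sum of bounded terms whose weights converge to $\pi_A$. Your remark on the continuous case also matches the Appendix, which establishes total-variation convergence through the Meyn--Tweedie conditions; there the uniform lower bound of Lemma~\ref{Lemma:TechnicalAppendix} plays the role of your suggested minorization, and boundedness of $g$ then gives the result.
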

Under mild regularity assumptions, these results can be extended to the case  where $X$ is a  compact subset of $\mathbb{R}^d$ and the probabilities governing users and drivers have a  strictly positive density
with respect to the Lebesgue measure. The proofs in this case are  more technical and follow  by combining results from \cite{meyn2012markov} and \cite{lasserre2000invariant},
which ensure the convergence in \textit{total variation} of the  distribution of the chain towards a unique invariant measure. The details are presented in the Appendix.

\section{The One-dimensional Circle}\label{sec:1d}

In order to get a better intuition of the dynamics, as well as various theoretical results, we now focus on a simple case, which is still challenging to analyse. We consider the interval $[0,1]$, and $d(u,v)=\min(|v-u|,1-|v-u|)$, representing that the points $0$ and $1$ are merged, or that the process takes place in a circle of length 1 (just the border). We assume both drivers and users appear following a uniform distribution and independently. Throughout this section, we will detail when we are considering the full interval (continuous), or a discretisation where $X=\{M/N: M=1\ldots,N\}$.

Note that we can assume that drivers are sorted, simply by reassigning the subindices. That is, we assume $s_1^t \leq s_2^t \leq \ldots \leq s_N^t$, and thus define $\ell_k^t=d(s_k^t,s_{k+1}^t)$, with the convention that $s^t_{N+1}=s^t_1$. In plain words, $\ell_k^t$ is the length of the interval $I_k^t$ between the two consecutive drivers $s^t_k,s^t_{k+1}$. Note that $\ell$ is a function of $s$, but we omit that in the notation to keep it lighter. This is all illustrated in Fig. \ref{fig:example_ring}. 
These intervals constitute the fundamental object in this single-dimensional model, and the Markov chain can be described in terms of $\ell^t=(\ell^t_1,\ldots,\ell^t_N)$ as the state.

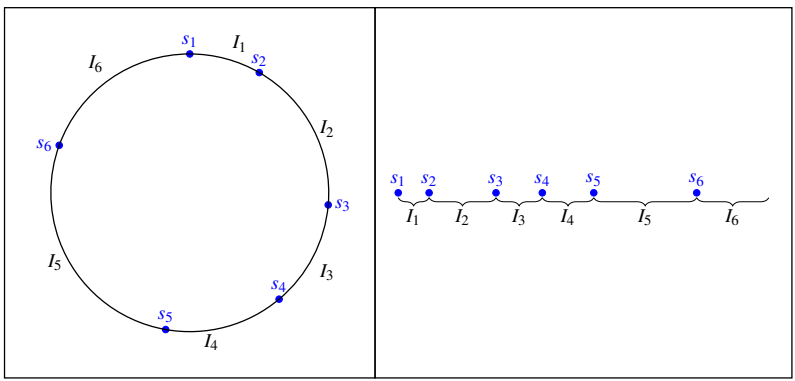
\begin{figure}[ht]
    \centering
    \resizebox{0.6\textwidth}{!}{  
 \usetikzlibrary{decorations.pathreplacing}

\begin{tikzpicture}

\def\angleSone{90}   
\def\angleStwo{60}   
\def\angleSthree{-5} 
\def\angleSfour{-50} 
\def\angleSfive{-100} 
\def\angleSsix{160}  
\def\angleSsixp{-200} 

\draw[thick] (-8,-4) rectangle (0,4); 

\filldraw[blue] ({-4+cos(\angleSone)*3},{sin(\angleSone)*3}) circle (2pt) node[above] {\(s_1\)};
\filldraw[blue] ({-4+cos(\angleStwo)*3},{sin(\angleStwo)*3}) circle (2pt) node[above] {\(s_2\)};
\filldraw[blue] ({-4+cos(\angleSthree)*3},{sin(\angleSthree)*3}) circle (2pt) node[right] {\(s_3\)};
\filldraw[blue] ({-4+cos(\angleSfour)*3},{sin(\angleSfour)*3}) circle (2pt) node[above] {\(s_4\)};
\filldraw[blue] ({-4+cos(\angleSfive)*3},{sin(\angleSfive)*3}) circle (2pt) node[above] {\(s_5\)};
\filldraw[blue] ({-4+cos(\angleSsix)*3},{sin(\angleSsix)*3}) circle (2pt) node[left] {\(s_6\)};

\draw[thick] ({-4+cos(\angleSone)*3},{sin(\angleSone)*3}) arc[start angle=\angleSone, end angle=\angleStwo, radius=3] node[midway, above right] {\(I_1\)};
\draw[thick] ({-4+cos(\angleStwo)*3},{sin(\angleStwo)*3}) arc[start angle=\angleStwo, end angle=\angleSthree, radius=3] node[midway, right] {\(I_2\)};
\draw[thick] ({-4+cos(\angleSthree)*3},{sin(\angleSthree)*3}) arc[start angle=\angleSthree, end angle=\angleSfour, radius=3] node[midway, below right] {\(I_3\)};
\draw[thick] ({-4+cos(\angleSfour)*3},{sin(\angleSfour)*3}) arc[start angle=\angleSfour, end angle=\angleSfive, radius=3] node[midway, below left] {\(I_4\)};
\draw[thick] ({-4+cos(\angleSfive)*3},{sin(\angleSfive)*3}) arc[start angle=\angleSfive, end angle=\angleSsixp, radius=3] node[midway, left] {\(I_5\)};
\draw[thick] ({-4+cos(\angleSone)*3},{sin(\angleSone)*3}) arc[start angle=\angleSone, end angle=\angleSsix, radius=3] node[midway, above left] {\(I_6\)};

\draw[thick] (0,-4) rectangle (9,4); 

\def\totalLength{8}  
\def\distanceOne{(\angleSone - \angleStwo)/360*\totalLength}  
\def\distanceTwo{(\angleStwo - \angleSthree)/360*\totalLength}  
\def\distanceThree{(\angleSthree - \angleSfour)/360*\totalLength}  
\def\distanceFour{(\angleSfour - \angleSfive)/360*\totalLength}  
\def\distanceFive{(\angleSfive - \angleSsixp)/360*\totalLength} 
\def\distanceSix{(360 - (\angleSsix + \angleSone))/360*\totalLength}  

\filldraw[blue] (0.5,0) circle (2pt) node[above] {\(s_1\)};
\filldraw[blue] ({0.5 + \distanceOne},0) circle (2pt) node[above] {\(s_2\)};
\filldraw[blue] ({0.5 + \distanceOne + \distanceTwo},0) circle (2pt) node[above] {\(s_3\)};
\filldraw[blue] ({0.5 + \distanceOne + \distanceTwo + \distanceThree},0) circle (2pt) node[above] {\(s_4\)};
\filldraw[blue] ({0.5 + \distanceOne + \distanceTwo + \distanceThree + \distanceFour},0) circle (2pt) node[above] {\(s_5\)};
\filldraw[blue] ({0.5 + \distanceOne + \distanceTwo + \distanceThree + \distanceFour + \distanceFive},0) circle (2pt) node[above] {\(s_6\)};

\draw [decorate,decoration={brace,amplitude=5pt,mirror,raise=3pt},yshift=0pt] 
(0.5,0) -- ({0.5 + \distanceOne},0) node[midway, below=6pt] {\(I_1\)};
\draw [decorate,decoration={brace,amplitude=5pt,mirror,raise=3pt},yshift=0pt] 
({0.5 + \distanceOne},0) -- ({0.5 + \distanceOne + \distanceTwo},0) node[midway, below=6pt] {\(I_2\)};
\draw [decorate,decoration={brace,amplitude=5pt,mirror,raise=3pt},yshift=0pt] 
({0.5 + \distanceOne + \distanceTwo},0) -- ({0.5 + \distanceOne + \distanceTwo + \distanceThree},0) node[midway, below=6pt] {\(I_3\)};
\draw [decorate,decoration={brace,amplitude=5pt,mirror,raise=3pt},yshift=0pt] 
({0.5 + \distanceOne + \distanceTwo + \distanceThree},0) -- ({0.5 + \distanceOne + \distanceTwo + \distanceThree + \distanceFour},0) node[midway, below=6pt] {\(I_4\)};
\draw [decorate,decoration={brace,amplitude=5pt,mirror,raise=3pt},yshift=0pt] 
({0.5 + \distanceOne + \distanceTwo + \distanceThree + \distanceFour},0) -- ({0.5 + \distanceOne + \distanceTwo + \distanceThree + \distanceFour + \distanceFive},0) node[midway, below=6pt] {\(I_5\)};
\draw [decorate,decoration={brace,amplitude=5pt,mirror,raise=3pt},yshift=0pt] 
({0.5 + \distanceOne + \distanceTwo + \distanceThree + \distanceFour + \distanceFive},0) -- (8.5,0) node[midway, below=6pt] {\(I_6\)}; 

\end{tikzpicture} 
}    \caption{Two interpretations of the metric space, either as a circle (left), or as the $[0,1]$ interval where 0 and 1 are identified as the same.}
    \label{fig:example_ring}
\end{figure}

Let us focus for now on the continuous setting. In this case, we can easily compute the expected value of the costs given the positions of the drivers. First,  the probability that the new passenger appears in the $k$-th interval $I_k^t$ is exactly $\ell_k^t$. Second, it is easy to see that its expected cost conditional on appearing in the $k$-th interval is $\ell_k^t/4$. Therefore, defining the function $V(\ell)=\sum_{k=1}^N \ell_k^2$ for any vector $\ell=(\ell_1,\ldots,\ell_N)$, and conditioning on the possible intervals where the new user can appear, it follows that the expected cost at stage $t$ is 
\begin{equation}\label{Eq:CostEqualVar}
    \mathbb{E}\big[c_t|\ell^t\big] = \sum_{k=1}^N \ell_k \cdot \frac{\ell_k}{4}= \frac{V(\ell^t)}{4}.
\end{equation}

Note that $V(\ell)$ is minimised when $\ell_k=\frac{1}{N}$ for all $k$. That is, the better distributed the drivers, the lower the expected cost. This is consistent with the intuition that an equidistant distribution of the drivers is the optimal situation.

In order to understand the dynamics of the system, let us analyse what is expected to happen in the next iteration given the current one. Concretely, given $\ell^t=\ell$ we analyse the expected cost in $t+1$. When a driver is assigned and thus removed, a new \textit{merged} interval appears, as illustrated in Fig. \ref{fig:MERGED}. Two cases can happen, as the new driver might appear in this same merged interval or in a different one. A careful computation of both cases allows us to compute the function $\Delta V(\ell)$, defined as the expected change in $V(\cdot)$
from $\ell$ to the next random state $\ell'$ that arises after the arrival of a user and the replacement of the assigned vehicle by a new vehicle at a random position.

\begin{figure}[ht]
    \centering   
\includegraphics[width=60mm]{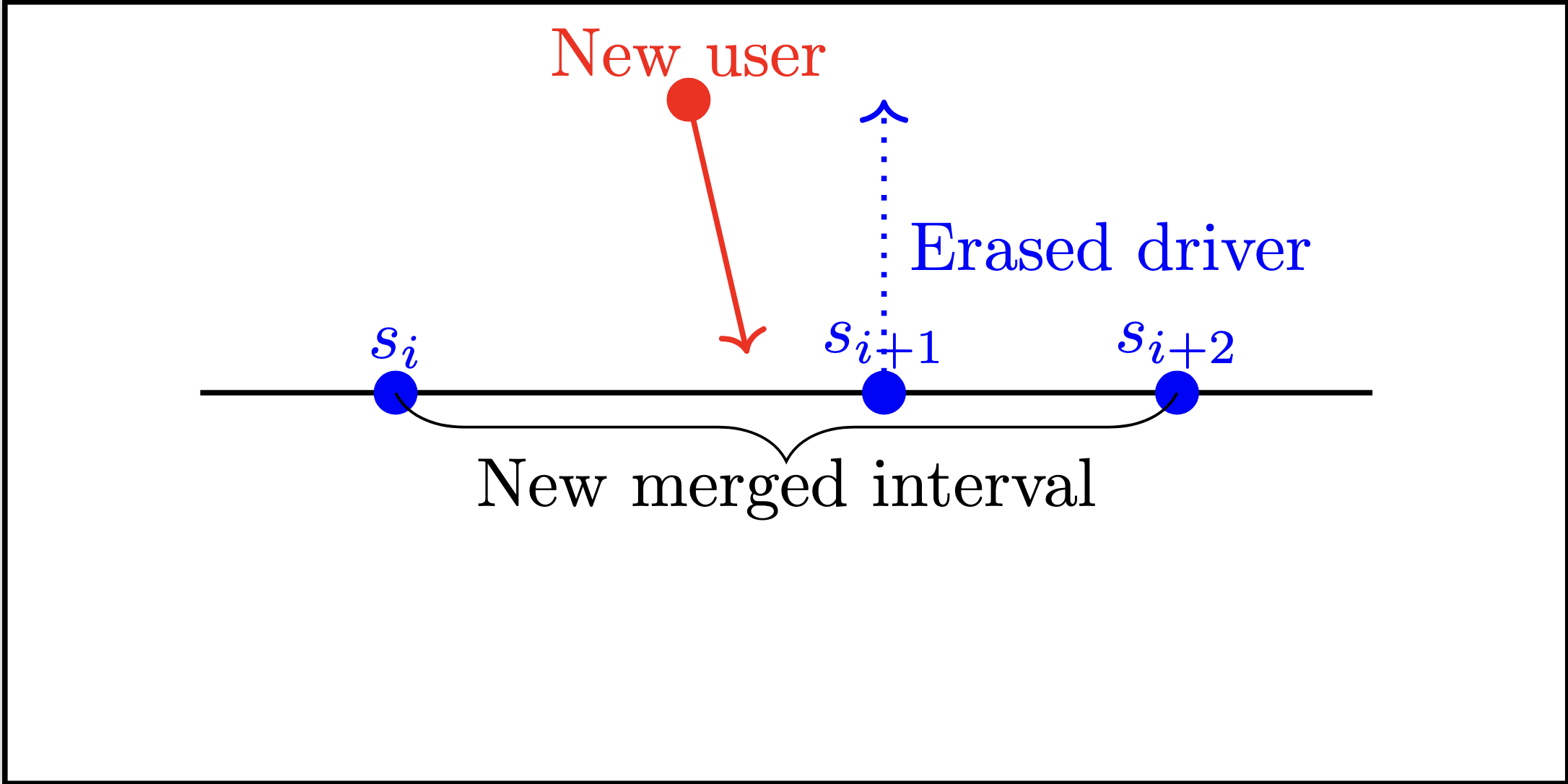}
    \caption{When a new user arrives, the two intervals that surround the assigned vehicle $s_{i+1}$ get merged.}
    \label{fig:MERGED}
\end{figure}

\begin{lemma}\label{Lemma:ChangeInIV}
In the continuous case:
\begin{multline}\label{ChangeInV}
    \Delta V(\ell) := \mathbb{E}\big[V(\ell')-V(\ell)\,|\,\ell\big] = \\
    \underbrace{\sum_{k=1}^N \frac{1}{3} \left[4\ell_k\ell_{k+1} - \ell_k^2 - \ell_{k+1}^2 \right]\frac{(\ell_k + \ell_{k+1})}{2} (\ell_k+{\ell_{k+1}})}_{(^*1)} + 
    \underbrace{\sum_{k=1}^N \sum_{j \neq \{k,k+1\}} 2\left[\ell_k \ell_{k+1} - \frac{\ell_j^2}{6}\right]\frac{(\ell_k + \ell_{k+1})}{2}\ell_j}_{(^*2)}
\end{multline}
\end{lemma}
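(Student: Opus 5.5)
We condition first on which driver is removed and then on where the replacement lands, and compute the expected change in $V$ in each case. Driver $s_{k+1}$ sits between intervals $I_k$ and $I_{k+1}$, and under the greedy rule it is assigned exactly when the user falls in the dominance zone of $s_{k+1}$. That zone is the second half of $I_k$ together with the first half of $I_{k+1}$, so the removal probability is $(\ell_k+\ell_{k+1})/2$, which is the common factor in both sums. After removal the two intervals merge into a single interval of length $L=\ell_k+\ell_{k+1}$. The new driver $y_t$ is uniform on the circle, so it lands in the merged interval with probability $L$ and in some other interval $I_j$, $j\notin\{k,k+1\}$, with probability $\ell_j$. These are the second factors $(\ell_k+\ell_{k+1})$ and $\ell_j$ appearing in $(^*1)$ and $(^*2)$.

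\emph{Case 1: the new driver lands in the merged interval.} Conditional on this, its position is uniform on an interval of length $L$, splitting it into $U$ and $L-U$ with $U\sim\mathrm{Unif}[0,L]$. Then $\mathbb{E}[U^2+(L-U)^2]=2L^2/3$. The change in $V$ is this quantity minus $\ell_k^2+\ell_{k+1}^2$, and all other intervals are unchanged. Expanding $L^2=\ell_k^2+\ell_{k+1}^2+2\ell_k\ell_{k+1}$ gives
\[
\tfrac{2}{3}\left(\ell_k^2+\ell_{k+1}^2+2\ell_k\ell_{k+1}\right)-\ell_k^2-\ell_{k+1}^2=\tfrac13\left[4\ell_k\ell_{k+1}-\ell_k^2-\ell_{k+1}^2\right],
\]
which is exactly the bracket in $(^*1)$.

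\emph{Case 2: the new driver lands in $I_j$.} The merge contributes $L^2-\ell_k^2-\ell_{k+1}^2=2\ell_k\ell_{k+1}$. Splitting $I_j$ uniformly contributes $2\ell_j^2/3-\ell_j^2=-\ell_j^2/3$. The total change is therefore $2[\ell_k\ell_{k+1}-\ell_j^2/6]$, matching $(^*2)$.

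Summing over $k$, with each case weighted by the removal and landing probabilities, and using the law of total expectation yields the formula. Throughout, indices are cyclic, $\ell_{N+1}=\ell_1$.

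The main point needing care is the bookkeeping. First, one must confirm that the removal probability of driver $s_{k+1}$ is the half-sum $(\ell_k+\ell_{k+1})/2$, using that on the circle the dominance zones are bounded by the interval midpoints. Second, one must index the sum by the removed driver, not by the interval, so that each merge event is counted exactly once. Degenerate situations such as $N\le2$, or a merged interval wrapping around the circle, can be handled by the same computation with the convention that the sum over $j$ is empty when no other intervals remain.
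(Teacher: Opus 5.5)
Your proof is correct and is essentially the paper's argument: condition on the merge of $I_k$ and $I_{k+1}$ (probability $(\ell_k+\ell_{k+1})/2$) and on whether the new driver falls in the merged interval or in some $I_j$, average the change in $V$ over a uniform split point, and combine by the law of total expectation. Your Case 2 value $2[\ell_k\ell_{k+1}-\ell_j^2/6]$ is the right one (the paper's intermediate display writes $\ell_j/6$, a typo), and the only slip is your closing aside. The formula does hold for $N=2$, but not for $N=1$: there the lone interval would be ``merged with itself'', and with $\ell_1=1$ the right-hand side gives $4/3$ instead of $0$, so that trivial case must be excluded rather than covered by the same computation.
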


In Eq. \eqref{ChangeInV}, the term (*1) represents the case where the new driver appears in the merged interval, and (*2) when it appears in a different interval. The expectation is taken over all possible appearances of the new user and vehicle.
The proof of Lemma \ref{Lemma:ChangeInIV} is provided in the appendix.

 Eq. \eqref{ChangeInV} can be used to prove one of the fundamental insights of this paper, namely that \textbf{a fully random distribution tends to worsen under the IGD}. To be precise:
 
 \begin{theorem} \label{thm:IGDWorseThanUniform}
In the continuous case, if the vector $S$ follows a uniform distribution (each of its coordinates is uniformly and independently drawn from $[0,1]$), then
\begin{equation}
    \mathbb{E}_\ell\big[\Delta V(\ell(s))\big]>0
\end{equation}
Where $\mathbb{E}_\ell[\cdot]$ represents that the expectation is taken over $\ell$.

 \end{theorem}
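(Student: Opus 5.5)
The plan is to compute $\mathbb{E}_\ell[\Delta V(\ell)]$ exactly, starting from the closed form in Lemma \ref{Lemma:ChangeInIV}. The key fact is that if the $N$ coordinates of $S$ are i.i.d.\ uniform on the circle, the spacing vector $\ell=(\ell_1,\ldots,\ell_N)$ is uniformly distributed on the simplex $\{\ell\ge 0,\ \sum_k \ell_k=1\}$, i.e.\ $\ell\sim\mathrm{Dirichlet}(1,\ldots,1)$. This is the standard result on uniform spacings on a circle: cut at $s_1$ and use the order statistics of the remaining $N-1$ points. Its mixed moments are explicit:
\begin{equation*}
\mathbb{E}\big[\ell_{i_1}^{a_1}\cdots\ell_{i_m}^{a_m}\big]=\frac{(N-1)!\,a_1!\cdots a_m!}{(N-1+a_1+\cdots+a_m)!}
\end{equation*}
for distinct indices $i_1,\ldots,i_m$. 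Every summand in \eqref{ChangeInV} is a homogeneous polynomial of degree $4$ in the spacings. Hence every moment needed carries the common factor $C_N=(N-1)!/(N+3)!$ times a product of factorials. By exchangeability, each summand has an expectation that does not depend on $k$ or $j$.

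For the term $(^*1)$, I write $a=\ell_k$ and $b=\ell_{k+1}$, so the summand is $\frac16(4ab-a^2-b^2)(a+b)^2$. Expanding gives
\begin{equation*}
\tfrac16\big(-a^4-b^4+2a^3b+2ab^3+6a^2b^2\big).
\end{equation*}
Substituting the moments $4!=24$, $3!\,1!=6$ and $2!\,2!=4$ gives $C_N(-24-24+12+12+24)/6=0$. So the merged-interval contribution vanishes in expectation. This cancellation should be double-checked, since the sign of the theorem rests entirely on $(^*2)$.

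For $(^*2)$, I set $c=\ell_j$ with $j\notin\{k,k+1\}$. The summand is
\begin{equation*}
(ab-c^2/6)(a+b)c=a^2bc+ab^2c-\tfrac16 ac^3-\tfrac16 bc^3,
\end{equation*}
where $a,b,c$ are three distinct spacings. The relevant moments are $2!\,1!\,1!=2$ for $a^2bc$ and $ab^2c$, and $3!\,1!=6$ for $ac^3$ and $bc^3$. The expectation of one summand is therefore $C_N(2+2-1-1)=2C_N>0$. There are $N(N-2)$ pairs $(k,j)$, so
\begin{equation*}
\mathbb{E}_\ell[\Delta V]=\frac{2N(N-2)(N-1)!}{(N+3)!}>0.
\end{equation*}

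The main obstacle is not conceptual but lies in the bookkeeping. First, the spacing distribution must be justified as Dirichlet on the circle (not on the interval). Second, I must make sure the indices $k$, $k+1$ and $j$ really are distinct spacings, so that the moment formula applies with distinct indices. This needs $N\ge 3$: for $N\le 2$ the double sum $(^*2)$ is empty and the expectation is exactly $0$, so the theorem should be read with $N\ge 3$ (for $N=2$ the indices $k$ and $k+1$ cover all intervals). If the $(^*1)$ cancellation turned out not to be exact, the fallback is to keep both terms and compare their coefficients. Because of the $N(N-2)$ multiplicity, $(^*2)$ dominates for large $N$ in any case.
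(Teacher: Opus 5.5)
Your proposal is correct and takes essentially the same route as the paper. The spacings are Dirichlet$(1,\ldots,1)$, the merged-interval term (*1) vanishes in expectation, and (*2) is positive because $2! > 3!/6$ in the Dirichlet moment formula. The differences are minor. The paper kills (*1) by conditioning on $\ell_k+\ell_{k+1}=L$, so that $\ell_k$ is uniform on $[0,L]$, rather than expanding the quartic. Your exact value $\frac{2(N-2)}{(N+1)(N+2)(N+3)}$, together with the remark that strict positivity requires $N\ge 3$ (the expectation is $0$ for $N=2$), is a correct refinement that the paper leaves implicit.
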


The proof is provided in the appendix. Its core is to show that:
\begin{itemize}
    \item (*1) is zero. This has a direct interpretation. The merged interval is everything that matters for the change, as all the other terms in $V(\ell)$ will remain the same. The merged interval used to be divided by a random driver and now is again randomly divided by a new driver, which is why the expected change is zero.
    \item (*2) is positive, which can also be interpreted. Ideally, the new user would appear in a short interval and the new driver in a long one. That would result in an elongation of the short interval and a subdivision of the long one, improving the overall situation. The opposite case, where a short interval is subdivided and a long interval is further increased, would be worse.
    
    To be neutral, let us focus on the case where both intervals were of the same size. In that case, the interval of the new user is merged with a different one, increasing its length, whereas the interval of the new driver is divided into two smaller ones. In total, the distribution of the interval becomes more heterogeneous, i.e., worse. This is illustrated in Fig. \ref{fig:OneIntervalMergedTheOtherDivided}.

\begin{figure}[ht]
\centering
  \makebox[\textwidth]{ 
    \resizebox{0.8\textwidth}{!}{
          \usetikzlibrary{decorations.pathreplacing, arrows.meta}

\begin{tikzpicture}[scale=1.1]

\draw[thick] (0,-2) rectangle (8,2);
\node at (4,2.3) {\textbf{Before}};

\draw[thick] (1,0) -- (3,0); 
\draw[thick] (5,0) -- (7,0); 

\filldraw[blue] (1,0) circle (3pt) node[below] {\(s_i\)};
\filldraw[blue] (3,0) circle (3pt) node[below] {\(s_{i+1}\)};
\filldraw[blue] (5,0) circle (3pt) node[below] {\(s_j\)};
\filldraw[blue] (7,0) circle (3pt) node[below] {\(s_{j+1}\)};

\filldraw[red] (2,1.2) circle (3pt) node[above] {New user};
\draw[red, ->, thick] (2,1.2) -- (2,0.2);

\definecolor{brown}{RGB}{139,69,19}
\filldraw[brown] (6,-1.2) circle (3pt) node[below] {New driver};
\draw[brown, ->, thick] (6,-1.2) -- (6,-0.2);

\draw[thick] (9,-2) rectangle (17,2);
\node at (13,2.3) {\textbf{After}};

\draw[thick] (10,0) -- (13,0); 
\filldraw[blue] (10,0) circle (3pt) node[below] {\(s_i\)};
\filldraw[blue] (13,0) circle (3pt) node[below] {\(s_{i+2}\)};
\draw [decorate,decoration={brace,amplitude=10pt},yshift=6pt] 
(10,0) -- (13,0) node[midway, yshift=17pt] {Merged};

\draw[thick] (14,0) -- (16,0);
\filldraw[blue] (14,0) circle (3pt) node[below] {\(s_j\)};
\filldraw[blue] (15,0) circle (3pt) node[below] {\(s_{new}\)};
\filldraw[blue] (16,0) circle (3pt) node[below] {\(s_{j+1}\)};
\draw [decorate,decoration={brace,amplitude=10pt},yshift=6pt] 
(14,0) -- (16,0) node[midway, yshift=17pt] {Subdivided};

\end{tikzpicture} 
    }
  }
  \caption{In the left, two intervals had the same length. A new user and a new driver appear, each on one of these intervals. As a result, the interval where the user appeared merges with another one and becomes wider, while the interval where the driver appeared becomes subdivided. The ``After'' situation is therefore worse than the ``Before''.}
  \label{fig:OneIntervalMergedTheOtherDivided}
\end{figure}
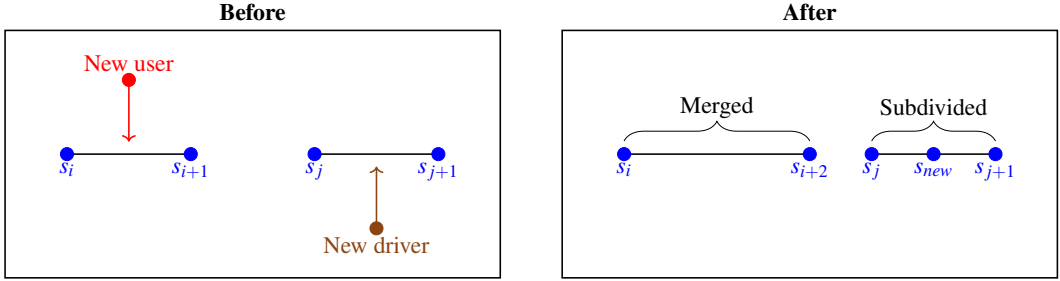
    
\end{itemize}

A crucial point is that the reasoning behind steps (*1) and (*2) follows the exact same logic illustrated in Fig. \ref{fig:ExampleIntro}. Theorem \ref{thm:IGDWorseThanUniform} suggests that the IGD limit distribution should perform worse than a fully random distribution of drivers, an observation that is empirically confirmed below. This outcome stems directly from the IGD dynamics: when a user appears, it often leads to the creation of a long interval without drivers, as discussed when analysing (*2). Such long intervals have a significant effect on $V(\ell)$ because (i) they have a high probability of containing a future user, and (ii) users appearing within those intervals incur a high expected cost.

{

The result from Theorem \ref{thm:IGDWorseThanUniform} also holds empirically in the discrete case. However, providing a formal proof is significantly more challenging, as the closed-form expression used for the probabilistic distribution of the $\ell_i$ is only valid in the continuous setting. In the discrete case, Eq. \eqref{Eq:CostEqualVar} requires a slight correction -- specifically, subtracting $O/4M^2$, where $O$ is the number of intervals with odd length. This correction is always small, as it is bounded above by $1/4M$.

In Fig. \ref{fig:ComparisonIGPVSUniformVSRandom}, we compare the IGD against two benchmarks, namely the static expected cost if drivers are either random (red curves) or uniformly distributed (green curves). In the left, we show the limit of $\mathbb{E}\big[c_t\big]$ for different numbers of drivers $N$, calculated through Monte Carlo simulations. Therein, it becomes clear that IGD yields significantly worse results than the two benchmarks.  While the random and uniform curves decrease proportional to $1/N$, our simulations suggest that the IGD equilibrium decreases at a rate that lies between $\log(N)/N$ and $\log^2(N)/N$.

The dynamic component can be seen on in the right panel of Fig. \ref{fig:ComparisonIGPVSUniformVSRandom}, where we fix $N=50$. For IGD, we compute (through Monte Carlo simulations) the evolution of the expected cost faced by each user if drivers start uniformly distributed. The increasing trend until convergence is also evident. Theorem \ref{thm:IGDWorseThanUniform} can be observed by noting that the limit of the blue curve is significantly greater than what would be achieved if the drivers remained randomly distributed (red horizontal line).

\begin{figure}[ht]
    \centering
    \begin{subfigure}{0.35\textwidth}
        \centering
        \includegraphics[width=\linewidth]{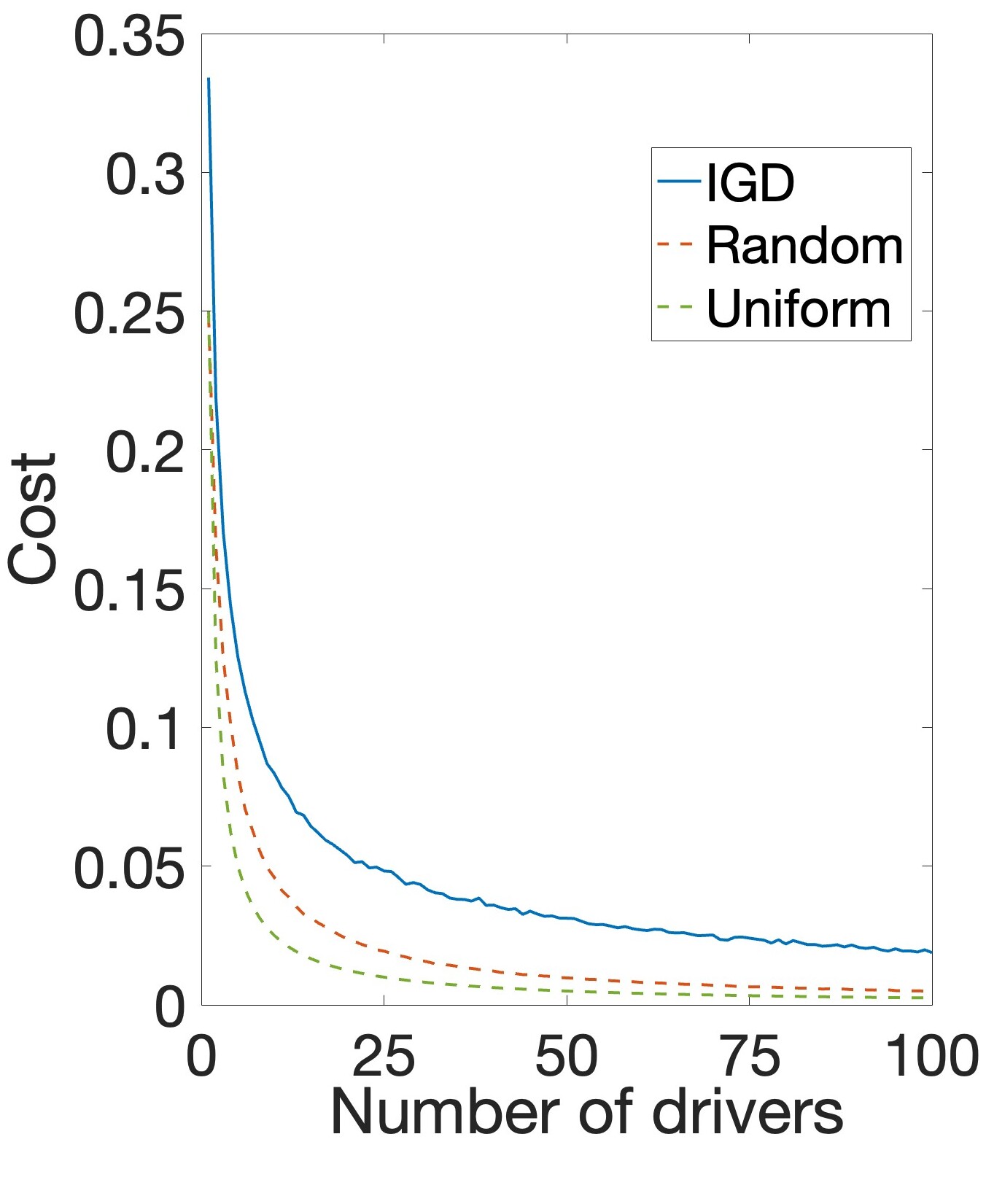}
    \end{subfigure}
    \begin{subfigure}{0.37\textwidth}
        \centering
        \includegraphics[width=\linewidth]{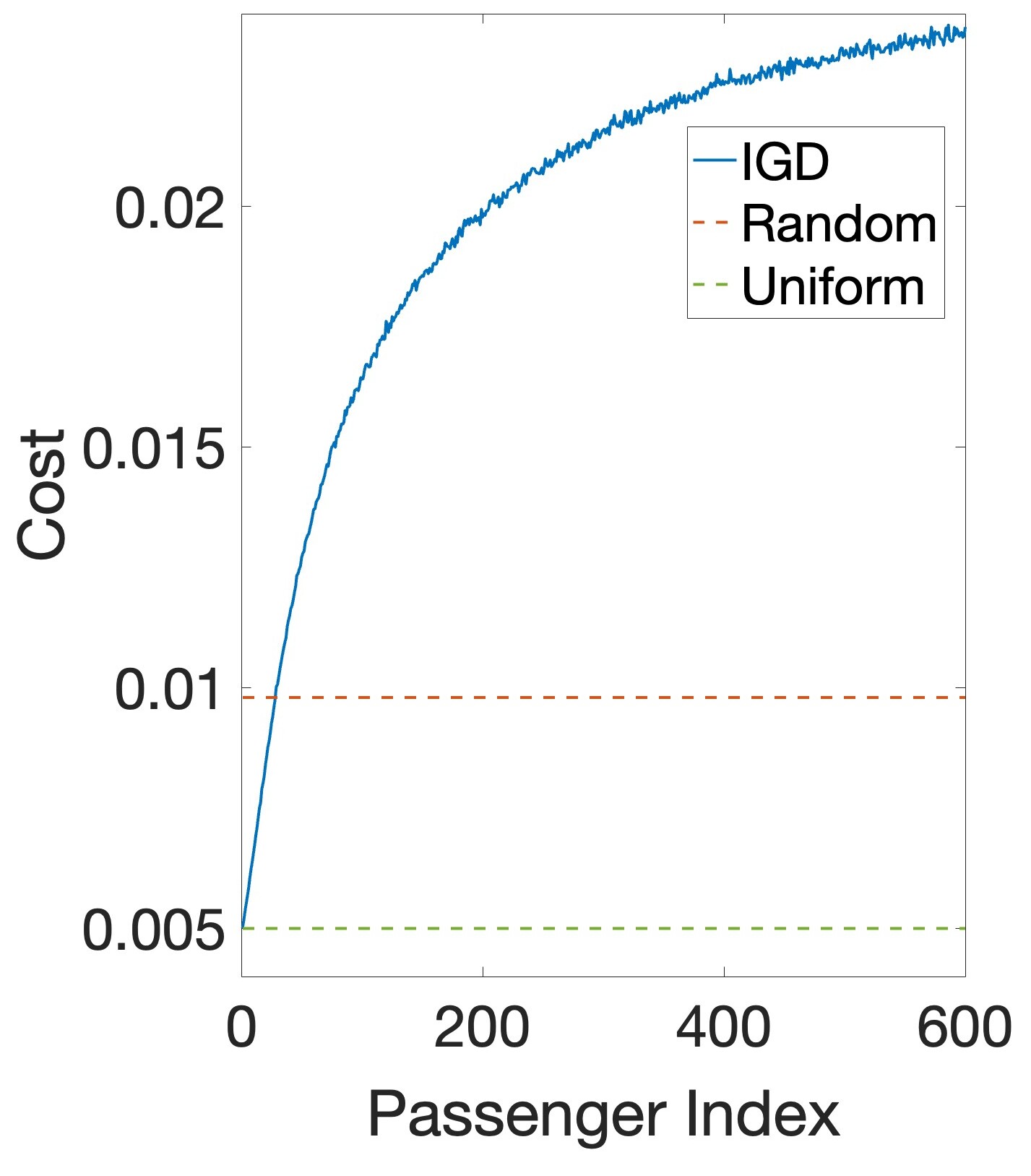}
    \end{subfigure}
    \caption{Comparison of the IGD with having the drivers randomly or uniformly located.}
    \label{fig:ComparisonIGPVSUniformVSRandom}
\end{figure}

We can further illustrate the source of why the IGD equilibrium is different than a random distribution through the following hypothetical example. Imagine that instead of assigning the users greedily, we do it randomly. In this case, the users' position would play no role, or in other words, we replace one random driver with another random driver. Therefore, that system would converge to a fully random distribution. The difference between assigning greedily and randomly lies precisely on the gaps: the drivers that already cover a large area have a greater probability of being selected, thus increasing their area even further. This is the ultimate reason why the IDG equilibrium is worse than random. 

A clarification is relevant in the example above. Assigning randomly would obviously yield greater costs, because every assignment would be very costly. A random assignment would be better than the IGD equilibrium for the next user, if that user is assigned greedily, but not for the current one.

Similarly, there is a noteworthy analogy with the well-known \textit{inspection paradox} \citep{stein1985sampling}.  This refers to a statistical bias that arises when the sampling method is related to the unit being sampled. The most famous example in transport is the difference between time-mean-speed TMS and space-mean-speed SMS \citep{knoop2009empirical}. In TMS, fast vehicles are sampled more frequently due to their higher speed, resulting in an overestimation of the true mean speed SMS. In the IGD, we are not sampling or estimating; instead, we are choosing a driver in every iteration. The probability of selecting a given driver is greater if the driver covers a larger dominance zone, which in turn increases the resulting cost. Similar to the difference between TMS and SMS, the resulting cost is greater than if every driver would have the same probability of being chosen.

\subsection{Some theoretical results on optimal assignments, and a comparison with the greedy rule} \label{sec:Greedy?}

So far, we have assumed a greedy rule when assigning users to drivers. In this subsection, we argue why this is a reasonable matching mechanism, and more specifically, why a better one would not prevent the IDG from happening\footnote{The literature reports related results in similar settings. Let us denote by $c(N)$ the limit cost for $\mathbb{E}\big[c_t\big]$ when we have $N$ drivers. In the continuous case, \cite{kanoria2021dynamic} proves that there exists $K$ such that no policy can achieve $c(N) \leq K \log(N)/N$. Note that if drivers would remain uniformly distributed, then $c(N) = 0.25/N$. Numerically, we find that greedy performs better than $\log^2(N)/N$ (more specifically, that $\frac{c(N)}{\log^2(N)/N} \rightarrow 0$ when $N\rightarrow \infty$), i.e., greedy is not worse than $\log(N)$ times the uniform result, which is obviously better than what can be achieved with an optimal policy. This resonates with the theoretical results for the greedy algorithm by \cite{balkanski2023power} in a similar setting, but when all drivers are available from the beginning and disappear one by one, and where they prove that the ratio between greedy and the optimal solution is not worse than a constant.}. As we are discussing the (sub)optimality of matching mechanisms, it is worth noting that our system can be described as a Markov Decision Process (MDP), where we are searching for a \textit{policy}, i.e., a function $\rho:[0,1]^{N+1} \rightarrow \{1,\ldots,N\}$ that takes as inputs the current position $(s_1,\ldots,s_N,u)$ of the drivers and the user, and returns which driver is assigned to the user. The greedy rule is one possible such $\rho$. The question is which is the optimal $\rho$, i.e., the one minimising the limit of the average cost when $t\rightarrow \infty$. In the \textit{continuous} case, where drivers and users can appear in any position, an optimal policy might not exist. In the \textit{discrete} case, where the circle is subdivided into $M$ subintervals of length $1/M$ each, there is a finite number of policies, so one of them must be optimal. We now describe our results.

\subsubsection{General results about the optimal policies}
\begin{theorem}
    If there are two drivers, greedy is optimal. This is valid in the discrete and in the continuous case.
\end{theorem}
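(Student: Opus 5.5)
With two drivers on the circle, the state is captured by a single gap. Write $\ell$ for the length of one of the two arcs between the drivers, so the other has length $1-\ell$; by symmetry we may take $\ell\le 1/2$. The key observation is that the next state does not depend on which driver the policy chooses. Whichever driver is assigned, the remaining driver stays put and the new driver appears at a uniform position $y_t$ independent of everything else. The new configuration is therefore one fixed driver plus one uniform driver in either case. By rotation invariance of the uniform distribution, the law of the next state, the unordered pair modulo rotation and hence the law of the new gap, is the same whichever driver was removed.

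The plan has three steps.

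\begin{enumerate}
\item \emph{The next state is policy-independent.} Formally, for any state $(s_1,s_2)$, any user position $u$ and any choice $k\in\{1,2\}$, the conditional law of $S^{t+1}$ modulo rotation is that of $\{0,U\}$ with $U$ uniform. In the discrete case the uniform law is over the grid $\{m/M\}$, and that set is also invariant under grid rotations. Since the cost depends only on relative positions, this is all we need. A stronger fact also holds: $S^{t+1}$ modulo rotation is independent of $S^t$ and of the decision. The resulting Markov chain on relative configurations is i.i.d. from step $2$ onward, whatever $\rho$ is.

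\item \emph{The stage-wise greedy choice is optimal.} The expected stage cost at time $t\ge 2$ under policy $\rho$ is $\mathbb{E}[d(x_t,s^t_{\rho})]$, where the state distribution does not depend on $\rho$. Greedy minimises the integrand pointwise, for every realisation of $(s^t,x_t)$, so it minimises each $\mathbb{E}[c_t]$. Consequently it also minimises the long-run average $\lim \frac1T\sum_t \mathbb{E}[c_t]$ (or $\limsup$, or the limit of $\mathbb{E}[c_t]$), which is the criterion used in the paper. No dynamic programming is needed, because the transition kernel is action-independent.

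\item \emph{The admissible policy class.} I would note that the argument covers randomised and history-dependent policies as well. Conditioning on the full history, the next relative state is still $\{0,U\}$ independent of the past, so the pointwise comparison still applies.
\end{enumerate}

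The main obstacle is mild. It is to state the rotation-invariance step cleanly, and in the continuous case to handle measurability and the possible non-existence of an optimal policy mentioned in the text. Here the pointwise argument sidesteps existence: greedy is explicitly measurable and attains the infimum at every stage. Ties in distance, where $u$ is equidistant from both drivers, give identical costs and identical next-state laws, so the tie-breaking rule is irrelevant. In the discrete case the same argument applies verbatim with sums replacing integrals.
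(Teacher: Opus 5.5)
Your proof is correct and rests on the same key observation as the paper's proof. After the assignment the system is one remaining driver plus a new uniformly placed driver, whichever driver was removed, so the future does not depend on the current choice and greedy is optimal. You make explicit what the paper leaves implicit, namely the rotation invariance of the uniform law and the stage-by-stage pointwise comparison. One minor caveat: it is the law of the configuration modulo rotation, not the absolute state law, that is policy-independent, and this suffices because $\min_k d(x_t,s^t_k)$ is rotation invariant.
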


\begin{proof}
    We note that when a user appears, the system's status after the assigned driver disappears is equivalent regardless of which driver was assigned. Namely, it has one driver located somewhere in the circle, and another driver will appear randomly. This implies that the future costs of the system are independent of the current assignment, and therefore a greedy assignment is optimal.
\end{proof}

The previous argument does not hold when the number of drivers exceeds two. Even with just three drivers, if two are close to each other but the third is not, under certain conditions it may be preferable to incur a slightly higher cost now by assigning one of the two drivers that are clustered together. However, there is a remarkable result that is valid for any $N$, namely, that an optimal assignment rule should always match to one of the two \textit{neighbouring} drivers.

In order to formalise this, let us consider a user $u$ and a vector of drivers $s$. We define $R(u,s)$ as the first driver to the right of $u$, and $L(u,s)$ as the first driver to the left. If there is a driver exactly at the same position as $u$ (which might happen in the discrete case), we have $R(u,s)=L(u,s)=u$. We call $R(u,s)$ and $L(u,s)$ the ``neighbours'' of $u$.

\begin{theorem}\label{Thm:NeihbourDrivers}
Consider the discrete case, where the circle is partitioned into $M$ sub-intervals of length $1/M$ each. 
Then any optimal policy will always assign a user $u$ to either $R(u,s)$ or $L(u,s)$.
As a consequence, at least half of the actions of any optimal policy are greedy.
These properties hold regardless of whether the arrival probabilities of users and drivers are uniform or not.
\end{theorem}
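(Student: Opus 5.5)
The plan is an exchange (coupling) argument on the Markov decision process. I will work with the average-cost optimality equation for the finite discrete model. Lemma \ref{Lemma:Ergodic} gives that every stationary policy induces an ergodic chain, so a relative value (bias) function $h$ exists and satisfies
\[
g+h(s)=\mathbb{E}_u\Big[\min_{k}\big\{d(u,s_k)+\mathbb{E}_y\,h(s^{-k}\cup\{y\})\big\}\Big],
\]
where $s^{-k}\cup\{y\}$ denotes the configuration with driver $k$ replaced by the new driver at $y$. A policy is optimal exactly when it attains the minimum in this equation. It therefore suffices to show the following: whenever a driver $s_j$ is not a neighbour of $u$, some neighbour $n\in\{R(u,s),L(u,s)\}$ gives a strictly smaller bracketed quantity. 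The proof does not use uniformity of arrivals anywhere, so the last sentence of Theorem \ref{Thm:NeihbourDrivers} comes for free.

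\textbf{Choice of neighbour.} Take the arc from $u$ to $s_j$ that realises $d(u,s_j)$. Since $s_j$ is not a neighbour, this arc passes through one of $R(u,s),L(u,s)$; call that driver $n$. Then $d(u,s_j)=d(u,n)+d(n,s_j)$, so the immediate cost saving from choosing $n$ instead of $s_j$ is exactly $\delta:=d(n,s_j)$. If $\delta=0$, the two resulting configurations coincide and there is nothing to prove. Otherwise, consider the two successor states $A=s^{-j}\cup\{y\}$ (assign $s_j$) and $B=s^{-n}\cup\{y\}$ (assign $n$). These are identical except that $A$ contains $n$ where $B$ contains $s_j$.

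\textbf{Coupling bound.} I will show $h(B)-h(A)\le\delta$. Run an optimal policy $\pi$ from $A$, and from $B$ use the mimicking policy that makes the same choices, with $s_j$ in $B$ playing the role of $n$ in $A$. Feed both systems the same user and driver arrivals. Until $\pi$ first assigns driver $n$ in $A$, the costs in the two systems coincide. At that moment the mimic assigns $s_j$ instead, paying at most $d(n,s_j)=\delta$ extra by the triangle inequality, and after that the configurations are identical. Because the chains are ergodic, this event happens almost surely in finite time. Bias differences equal expected total cost differences under such a coupling, so $h(B)\le h(A)+\delta$. Combining with the immediate saving, the value of choosing $n$ is at most the value of choosing $s_j$, so a neighbour is always at least as good.

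\textbf{Main obstacle: strictness.} The argument so far gives only a weak inequality, which shows that some optimal policy uses neighbours. The statement claims that \emph{every} optimal policy does, so I need $h(B)-h(A)<\delta$ strictly. My plan is to exhibit a positive-probability event on which the mimic's extra cost at the coupling time is strictly less than $\delta$. The candidate is a user arriving at a location where $\pi$ (optimal from $A$) uses $n$ and the triangle inequality is strict, for example a user on the far side of $s_j$ from $n$, where $d(x,s_j)<d(x,n)$ and the mimic actually gains. Full support of $Q$ and $P$ should make such an event reachable with positive probability before any other coupling time. If this gets tangled, a fallback is to modify the mimic at the first visit to a suitable configuration so that it strictly improves on $\pi$.

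\textbf{Consequence.} Once the first part is proved, any optimal action lies in $\{R(u,s),L(u,s)\}$, and the closer of these two is exactly the greedy choice. Hence in every decision the optimal policy picks one of two candidates, one of which is greedy; this is the sense in which at least half of its actions are greedy. I would make the counting precise by pairing each non-greedy choice with its greedy counterpart.
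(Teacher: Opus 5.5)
\textbf{Gap 1: strictness, which is what ``any optimal policy'' rests on.}
Your exchange is the paper's exchange. You swap the non-neighbour $s_j$ for the neighbour $n$ on the shortest arc, mimic with the roles of $n$ and $s_j$ exchanged until the tagged driver is used, and pay at most $d(n,s_j)$ at that moment by the triangle inequality. Running this through the bias $h$ rather than through the paper's non-Markovian policy $\rho'$ is fine.

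But what you actually prove is that a neighbour is \emph{weakly} optimal at every $(s,u)$, i.e.\ that \emph{some} optimal policy is neighbour-only. The theorem's claim about \emph{every} optimal policy depends entirely on strictness, and that step is only a plan.

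Your candidate event needs the optimal $\pi$ started from $A$ to use the tagged driver $n$, at the coupling time and with positive probability, for a user $x$ with $d(x,s_j)<d(x,n)+d(n,s_j)$. Nothing in your argument controls where an arbitrary optimal $\pi$ uses $n$. If it only uses $n$ for users at $n$ or just behind it (on the side away from $s_j$), the triangle inequality is an equality and your coupling loses exactly $\delta$. Full support of $Q$ and $P$ makes configurations reachable; it does not constrain the choices of $\pi$.

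The paper gets strictness from an explicit event in its cycle comparison: $u_2=s_a$ at the swap-back, where $\rho'$ pays $0$. You need an event of this kind, together with a proof that it has positive probability under the policy you compare against. The paper's own justification of that probability is only sketched.

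\textbf{A smaller point: ergodicity.}
Lemma~\ref{Lemma:Ergodic} concerns the greedy chain only, and arbitrary stationary policies need not be ergodic. For $N=3$, take a policy that assigns the third driver whenever two drivers share the point $p$, or the point $q\neq p$. It has the disjoint closed sets $\{p,p,\cdot\}$ and $\{q,q,\cdot\}$. This has three consequences for your argument:
\begin{itemize}
\item The ACOE should be justified by the MDP being communicating, which holds because greedy is irreducible.
\item An optimal policy is only forced to attain the minimum on its recurrent states, not everywhere.
\item Almost-sure finiteness of your coupling time is not automatic. Running the coupling on $\beta$-discounted values, where $T=\infty$ costs nothing, and letting $\beta\to1$ repairs the weak inequality.
\end{itemize}

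\textbf{Gap 2: the ``at least half greedy'' consequence does not follow.}
Knowing that every optimal action lies in $\{R(u,s),L(u,s)\}$ is compatible with sending every user of a gap to its farther endpoint. So the proposed pairing has nothing to pair.

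The missing idea, which the paper uses, is a threshold structure. In a fixed state the successor distribution depends only on which driver is removed, not on where the user stood. In your notation, the term $\mathbb{E}_y h(s^{-k}\cup\{y\})$ does not depend on $u$. Hence if $u_1<u_2$ in a gap $[d_1,d_2]$ were served crosswise ($u_1$ by $d_2$, $u_2$ by $d_1$), exchanging the two assignments would leave the future unchanged and lower the immediate cost.

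Therefore each gap splits at some $z$ into a part served by $d_1$ and a part served by $d_2$. The non-greedy positions lie between $z$ and the midpoint of the gap, hence within one half of it, and this is what gives ``at least half''.
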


\begin{proof}
Consider a policy $\rho$ that does not always assign to a neighbour. We will show that $\rho$ can be improved, implying that it is not optimal. We will construct a \textit{non-Markovian} policy, meaning one that depends not only on the current state but also on past states. This does not pose a problem, since in any finite MDP, the expected value achieved by any non-Markovian policy can also be attained by a Markovian policy \citep{puterman2014markov}. In other words, it suffices to show that there exists a non-Markovian policy that is better than $\rho$.

This non-Markovian policy $\rho'$ operates as follows. It has a  passive/active label that begins as passive. It assigns exactly the same as $\rho$ for the initial iterations (potentially none) until the first time $\rho$ does not assign to a neighbour. Let us denote by $s_a$ the driver that was assigned by $\rho$, and by $s_n$ the neighbour that is \textit{between} $s_a$ and $u$. That is, $s_a$ is to the left of $u$ if the distance between $s_a$ and $u$ is achieved by $u$ moving to the left, and vice-versa; if the distance is exactly $1/2$, we can take $s_n$ to be any of the two neighbours. In this iteration, $\rho'$ assigns to $s_n$ instead of $s_a$. This is shown in Fig. \ref{fig:NonMarkovianPolicyNeighbours}. We now say that $\rho'$ is active.

\begin{figure}[ht]
\centering
  \makebox[\textwidth]{ 
    \resizebox{0.55\textwidth}{!}{ 
      \begin{tikzpicture}[scale=1.1]

\draw[thick] (0,0) -- (8,0);

\filldraw[blue] (2,0) circle (3pt) node[below] {\(s_a\)};

\filldraw[blue] (5,0) circle (3pt) node[below] {\(s_n\)};

\filldraw[red] (7,0) circle (3pt) node[below] {\(u\)};

\draw [decorate,decoration={brace,amplitude=10pt},yshift=6pt]
(2,0) -- (7,0) node[midway, yshift=14pt] {\(\leq 1/2\)};

\end{tikzpicture} 
    }
  }
  \caption{The policy $\rho$ assigns $u$ to $s_a$, so $\rho'$ assignes its neighbour $s_n$ instead.}
  \label{fig:NonMarkovianPolicyNeighbours}
\end{figure}
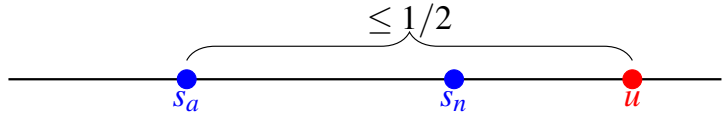

Note that after this assignment, $\rho$ and $\rho'$ do not have the same set of drivers, but they differ by one: $s_n$ would be available if using $\rho$, but we have $s_a$ available instead. In the next iterations (potentially none), $\rho'$ copies what $\rho$ would do if having $s_n$. This is stopped when $\rho$ would assign to $s_n$, in which case $\rho'$ assigns to $s_a$. That is:

\begin{equation}
    \rho'(s,u)=\begin{cases}
       \rho(s\cup \{s_n\} \setminus \{s_a\},u) & \text{ if }  \rho(s\cup \{s_n\} \setminus \{s_a\},u) \neq s_n \\
       s_a & \text{ Otherwise}      
    \end{cases}
\end{equation}

When the latter happens, $\rho'$ becomes passive and the \textit{cycle} starts again. Note that in a whole cycle, i.e., since $\rho'$ becomes passive (or the first iteration) until it becomes passive again, $\rho$ and $\rho'$ have the same cost in every iteration but two. Denoting by $u_1$ and $u_2$ the users involved in these two iterations, we have that the cost using $\rho$ would be $d(u_1,s_a) + d(u_2,s_n)$. And then

\begin{equation}\label{Eq:DifferencePP'}
    d(u_1,s_a) + d(u_2,s_n) = d(u_1,s_n) + d(s_n,s_a) + d(u_2,s_n) \geq d(u_1,s_n) + d(u_2,s_a)
\end{equation}
where the first equality in Eq. \eqref{Eq:DifferencePP'} holds because $s_n \in (u_1,s_n)$, and the inequality is just the triangular inequality. Note that the right hand side $d(u_1,s_n) + d(u_2,s_a)$ is exactly the cost yielded by $\rho'$. In other words, $\rho'$ yields a cost that is lower or equal than $\rho$ in every cycle. Moreover, there is a positive probability that the inequality is strict: for instance, if $u_2=s_a$ the inequality would be strict, and this has a positive probability. It suffices that all new users and drivers appear in the same position as $s_a$ from the moment $\rho'$ becomes active until it becomes passive. This completes the proof that $\rho'$ is better than $\rho$, and as argued above, we conclude that any optimal policy must always assign to a neighbour.

We now show why at least half of the actions are greedy. To do this, note that if $d_1$ and $d_2$ are two consecutive drivers, then we can divide the interval $[d_1,d_2]$ into two subintervals $[d_1,z],(z,d_2]$ for some $z$, such that if a user appears in $[d_1,z]$ it would be assigned to $d_1$, and a user in $(z,d_2]$ would be assigned to $d_2$ (to ease the notation, we are using $[a,b]$ to describe all the discrete slots that belong in that interval). To see why such a $z$ exists, suppose instead there exist users $u_1 < u_2$ with $u_1$ assigned to $d_2$ and $u_2$ assigned to $d_1$. Then, swapping these assignments would keep the probabilistic distribution for the next iteration unchanged, and diminish the expected costs in this iteration, which cannot happen in an optimal policy. This division into two subintervals directly implies that at least half of the users' positions would be assigned greedily.
\end{proof}

Let us remark that a similar result holds in the continuous case. While there might be no optimal policy, any policy that sometimes does not assign to a neighbour can be improved doing exactly the same as we did in the proof of Theorem \ref{Thm:NeihbourDrivers}, i.e., building $\rho'$. However, this change would require admitting Non-Markovian policies, as the result mentioned above from \cite{puterman2014markov} is not necessarily valid when the state space is uncountable.

A natural question is whether a similar Theorem \ref{Thm:NeihbourDrivers} holds in other domains? The very notion of ``neighbours'' only makes sense in one dimension. On the other hand, during the proof, the fact that we were considering neighbours was only used in Eq. \eqref{Eq:DifferencePP'}, specifically in the first equality, as we knew that \textit{the shortest path from the assigned driver $s_a$ to the user $u_1$ passed through the neighbour $s_n$.} This means that in other spatial domains, whenever the same situation happens, we can discard $s_a$ as an assignable driver. In Euclidean regions in two or more dimensions, this would typically have probability zero, as it requires all the points to lie on a one-dimensional line. But in graphs, this can help. All together, this implies the following corollary:

\begin{corollary}\label{Cor1}
Consider $X=(V,E)$ a graph (directed or undirected) where every arc has an associated distance. Take an optimal policy $\rho$. If the user is in $u \in V$, and there are vehicles in $v_1, v_2 \in V$, with $v_1$ lying within a shortest path from $v_2$ to $u$, then $\rho$ will not assign $v_2$ to $u$.
\end{corollary}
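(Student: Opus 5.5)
The plan is to reuse, almost verbatim, the exchange argument from the proof of Theorem \ref{Thm:NeihbourDrivers}. The only place where one-dimensionality entered there was the first equality in Eq. \eqref{Eq:DifferencePP'}, and the hypothesis of the corollary supplies exactly that equality.

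Suppose an optimal policy $\rho$ assigns the user at $u$ to the vehicle at $v_2$, while a vehicle at $v_1$ lies on a shortest path from $v_2$ to $u$. Here ``from $v_2$ to $u$'' is the direction relevant for the cost $d(u,v_2)$, i.e. the travel direction of the vehicle, which matters when the graph is directed. Since $X$ is finite, we work in the finite-MDP setting, so by \citep{puterman2014markov} it is enough to exhibit a possibly non-Markovian policy $\rho'$ that is strictly better than $\rho$.

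We build $\rho'$ exactly as before, with $s_a=v_2$ and $s_n=v_1$.
\begin{enumerate}
\item $\rho'$ copies $\rho$ until the first such event, and at that event assigns $v_1$ instead of $v_2$.
\item From then on it acts as $\rho$ would with $v_1$ in the fleet, using the displayed rule for $\rho'(s,u)$.
\item When $\rho$ would assign $v_1$ to some later user $u_2$, $\rho'$ assigns $v_2$ instead and becomes passive again.
\end{enumerate}
Within each cycle the two policies have identical costs except at the two swap steps. For those, the shortest-path hypothesis gives $d(v_2,u_1)=d(v_2,v_1)+d(v_1,u_1)$, where distances are measured from vehicle to user. The triangle inequality $d(v_2,u_2)\le d(v_2,v_1)+d(v_1,u_2)$ then yields
\[
d(v_2,u_1)+d(v_1,u_2)\ \ge\ d(v_1,u_1)+d(v_2,u_2).
\]
This holds in directed graphs as well, because every distance involved is vehicle-to-user in a consistent orientation.

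For strictness, we copy the earlier argument: with positive probability every new user and vehicle during the active phase appears at $v_2$. Then $u_2=v_2$, and the inequality is strict provided $d(v_2,v_1)>0$. The case $v_1=v_2$, or more generally zero distance, is trivial or excluded, and the positivity assumptions on $P$ and $Q$ guarantee these events have positive probability.

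Finally, the argument must rule out $\rho'$ merely tying with $\rho$ in long-run average cost. Since ergodicity (Lemma \ref{Lemma:Ergodic}) makes such cycles recur with positive frequency, the average cost strictly decreases. This contradicts the optimality of $\rho$.

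The main obstacle is this last step: turning a per-cycle weak improvement with positive-probability strictness into a strict improvement of the long-run average. I would handle it as the paper implicitly does, through renewal at the passive times combined with recurrence from the ergodicity of the chain.
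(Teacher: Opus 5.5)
Your route is the paper's own. The corollary is obtained by rerunning the exchange argument of Theorem \ref{Thm:NeihbourDrivers} with $s_a=v_2$ and $s_n=v_1$, with the shortest-path hypothesis supplying the first equality in Eq.~\eqref{Eq:DifferencePP'}. Your remark that all distances must be read vehicle-to-user, so that both that identity and the triangle inequality survive in directed graphs, is correct; the paper leaves it implicit. Up to this point you have a sound per-cycle weak improvement.

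The gap is in upgrading this to ``$\rho$ is not optimal'', and you inherit it from the paper's proof of Theorem \ref{Thm:NeihbourDrivers}. The event you use for strictness does not work. If every new vehicle in the active phase appears at $v_2$, then right after the swap step the shadow fleet of $\rho$ is again $s$, with a vehicle sitting at $v_2$. Any $\rho$ that serves a user at $v_2$ with that vehicle (cost $0$) then stays at $s$ forever and never assigns $v_1$. Along this scenario the active phase never closes, so it never produces a step with $u_2=v_2$.

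In general, strictness needs $\rho$ to use $v_1$, with positive probability, for some $u_2$ such that $v_1$ is \emph{not} on a shortest path from $v_2$ to $u_2$. Nothing in the argument forces this for an arbitrary $\rho$. For example, if $v_2$ is a leaf of an undirected graph attached to $v_1$, it requires $\rho$ to assign $v_1$ to a user standing at $v_2$.

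Invoking Lemma \ref{Lemma:Ergodic} does not close the gap either. That lemma concerns the greedy chain, and its irreducibility argument uses the fact that a user located at a vehicle is served by that vehicle, which an arbitrary $\rho$ or $\rho'$ need not do. A renewal argument would also require active phases to end almost surely with finite mean, which is not shown.

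As written, the argument proves only the weak statement that switching from $v_2$ to $v_1$ never increases cost. The claim for \emph{every} optimal policy needs a separate argument showing strict savings with positive long-run frequency under $\rho$.
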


While Corollary \ref{Cor1} seems too specific, there are some types of graphs where it can be useful to reduce the number of potential actions. Two notable examples are:

\begin{enumerate}
    \item Take $G$ an undirected grid, and $u$ a node there. If $v_1$ and $v_2$ are both to the north and east of $u$, but $v_2$ is further in both dimensions, then $v_2$ can be discarded. Obviously, the same happens if replacing ``north'' with ``south'', and/or ``east'' with ``west''.
    \item Take $G$ a tree, and define its root to be where the user is located. If $v_1$ is an ancestor of $v_2$, then $v_2$ can be discarded when deciding which vehicle to assign.
\end{enumerate}

Theorem \ref{Thm:NeihbourDrivers} has several implications. The first one is about reinforcing the intuition that greedy is reasonable. Indeed, while we should not always assign greedily, we should not do something too different either. We need to always select one of the two neighbours, and greedy selects the closest one. 

There is also a direct relationship between Theorem \ref{Thm:NeihbourDrivers} and the IGD. \textbf{If we use the optimal policy, and a user arrives in a long gap, this long gap will increase} because we will remove one of the gap's borders. This can be seen using Fig. \ref{fig:MERGED} again: the optimal policy would assign either $s_i$ or $s_{i+1}$ to the new user, and in any case the long interval $[s_i,s_{i+1}]$ would get merged with another interval.

On the other hand, Theorem~\ref{Thm:NeihbourDrivers} significantly narrows the policy space by showing that, in each state, at most two assignments need to be considered instead of all $N$ possibilities. This reduction makes it feasible to explore optimal policies directly. In the next subsection, we leverage this result to compute the optimal policy for small values of $M$ and $N$, compare its performance with the greedy policy, and confirm that the IGD phenomenon still emerges, even under optimal decision-making.

\subsubsection{Explicit optimal policies for small instances of the discrete case}\label{sec:OptimalSmallInstances}

In any finite MDP, the optimal policy can be derived through a Linear Program \citep{puterman2014markov}. This LP has as many variables as the number of possible states, and as many restrictions as the number of states multiplied by the number of actions for each state, which in our case is lower or equal than two (thanks to Theorem \ref{Thm:NeihbourDrivers}). In practice, we can solve this problem for some combinations of $M \leq 15, N \leq 7$. 

Let us first fix $M=8$, and summarise the results in Table \ref{tab:GreedyVSOptimalM8}. For $N=3,\ldots,7$,  we show two results:
\begin{itemize}
    \item In the top row, we have the extra cost yielded by greedy in percentages (computed through Monte Carlo simulations), compared to the optimal policy. While these values increase, they do it concavely, and moreover, the percentage is never over 10\%. This suggests that greedy is indeed competitive. 
    \item We then show which percentage of the states require a strictly non-greedy decision under the optimal policy, i.e., assigning to the furthest neighbour. While we know theoretically that this percentage is below 50\%, we observe that the actual numbers are very low. We remark that having 0\% in the case of 3 drivers occurs only because $M=8$, as for greater values of $M$ we do find strictly positive percentages.
\end{itemize}

\begin{table}[ht]
\centering
\begin{tabular}{l|ccccc}
\hline
\textbf{Drivers} & 3 & 4 & 5 & 6 & 7 \\
\hline
\% increased costs & 2.13 & 4.07 & 6.05 & 7.9 & 9.47 \\
\% non-greedy actions & 0 & 1.8 & 2 & 2.1 & 2.2 \\
\hline
\end{tabular}
\caption{Comparison of greedy and the optimal policy for $M=8$ and different number of drivers.}
\label{tab:GreedyVSOptimalM8}
\end{table}

To further understand these numbers, we now show in Table \ref{tab:N4MVarying} the results for $N=4$ and varying $M$. Crucially, we observe that greedy becomes more competitive as $M$ increases.

\begin{table}[ht]
\centering
\begin{tabular}{l|cccccc}
\hline
\textbf{$M$} & 5 & 6 & 7 & 8 & 9 & 10 \\
\hline
\% increased costs & 6.91 & 5.3 & 4.66 & 4.2 & 3.74 & 3.4 \\
\% non-greedy actions & 0 & 0 & 0.95 & 1.82 & 2.02 & 1.96 \\
\hline
\end{tabular}
\caption{Comparison of greedy and the optimal policy for $N=4$ and different values of $M$.}
\label{tab:N4MVarying}
\end{table}

Tables \ref{tab:GreedyVSOptimalM8} and \ref{tab:N4MVarying} reinforce that greedy is not so far from the optimum. We now show that even if we use the optimal policy, the IGD also appears. Let us first consider $N=5$, $M=9$, and analyse the temporal evolution of two indices, starting with drivers randomly located: (i) the cost and (ii) the variance of the intervals. We know that in the case of greedy these two quantities are correlated (Eq. \ref{Eq:CostEqualVar}), but this is not necessarily true for the optimal policy. Nevertheless, the variance still acts as a proxy for the IGD: If the IGD exists, then the longer intervals will become even longer, raising the variance. This is shown in Fig. \ref{fig:CostVarN5M9}, where we show the average of 10,000 repetitions of the experiment. We observe that:

\begin{figure}[ht]
    \centering
    \includegraphics[width=0.85\linewidth]{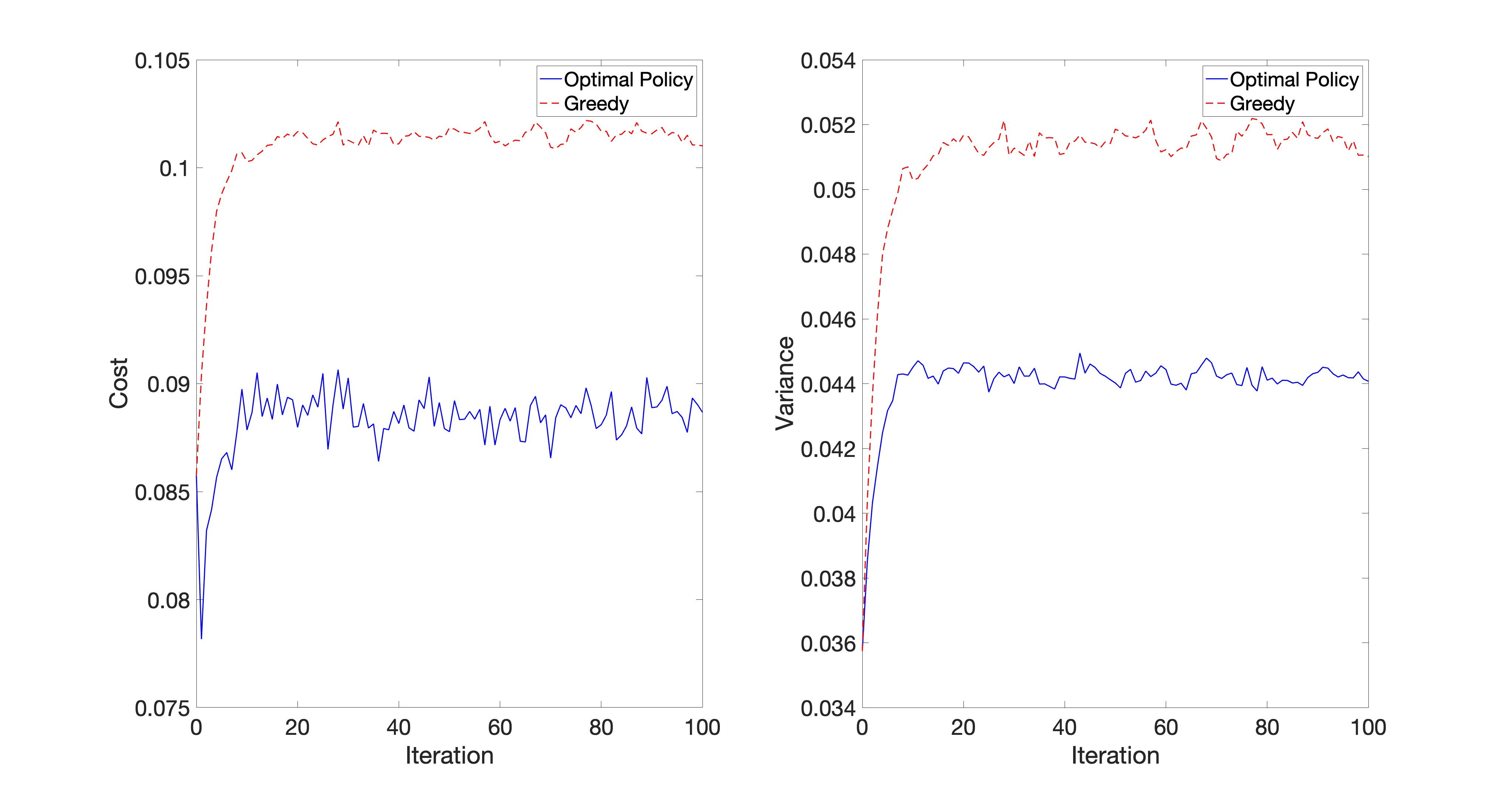}
    \caption{Comparison of the evolution of the system using the optimal policy versus greedy.}
    \label{fig:CostVarN5M9}
\end{figure}

\begin{itemize}
    
    \item While greedy is indeed worse than the optimal, both curves are also worse than the drivers being randomly distributed, represented by the starting point of both curves. That is, even when using an optimal policy the \textbf{IGD equilibrium is worse than a fully random distribution}.
    \item The reason for this increased costs stems from the variance. Even in the optimal policy, the variance increases significantly before converging.
    \item We can observe that the shape of the two red curves are almost identical, which is expected as the greedy costs depend directly on the variance. This is no longer true for the optimal policy, where there is some correlation but the curves are different.
    \item In the optimal policy, there is a clear drop at the beginning, after which the costs start to increase. This suggests that there is some smart way to respond to the initial random distribution of the drivers, but this smarter response does not persist for the subsequent iterations.
    
\end{itemize}

We now study the appearance of the IGD in a more direct way. As we have few drivers, there cannot be many long intervals. So we focus on the longest one. To be precise, we take the following steps:
\begin{enumerate}
    \item We consider a policy $\rho$ that can be optimal or greedy.
    \item For this policy, we compute the transition matrix between the positions of the drivers, i.e., $\Pi_{j,i}=P(s^t_1=i_1/M,\ldots,s^t_N=i_N/M|s^{t-1}_1=j_1/M,\ldots,s^{t-1}_N=j_N/M)$. Note that this can be done trivially because we know the assignment rule $\rho$.
    \item We compute the stationary probability $\pi$ of the matrix $\Pi$, and define $W_{a,b}=P(I(s^t)=b|I(s^{t-1}=a))$, where $I(s)$ is the longest gap between two consecutive drivers, when the drivers are described by $s$. We then compute $W_a=E_b(W_{a,b})$, where the expected value is taken over $b$ using the probability distribution $\pi$.
\end{enumerate}

In plain words, $W_a$ measures the expected longest gap in the next iteration, when the current longest gap is of length $a$. In Table \ref{tab:expected_max_gap} we report the results for $M=8,N=4$, where the ideal distribution is having a longest interval of $2/8$. To keep the Table simple we do not divide by 8. The main observations from this Table are:

\begin{table}[ht]
\centering
\begin{tabular}{l|ccccccc}
\hline
$a$ &2& 3 & 4 & 5 & 6 & 7 & 8 \\
\hline
$W_a$ Optimal & 3.5 & 3.77 & 4.18 & 4.71 & 5.2 & 5.65 & 6 \\
$W_a$ Greedy & 3.5 & 3.84 & 4.34 & 4.79 & 5.25 & 5.66 & 6 \\
\hline
\end{tabular}
\caption{Expected next max-gap given current max-gap $a$, for optimal and greedy policies, using $M=8$ (and omitting the division by 8).}
\label{tab:expected_max_gap}
\end{table}

\begin{itemize}
    \item The optimal policy is indeed better than greedy by keeping a better drivers' distribution. In every row, the optimal policy achieves a strictly lower value than greedy, with the exception of the two extreme cases where the assignment is trivial so the values are the same for both policies.
    \item However, the differences are quite mild. There is not much room from improvement compared to using the greedy assignment policy.
    \item The IGD can be observed when $W_a \geq a$, i.e., when the longest gap tends to further increase. Obviously, this cannot always hold, as in the extreme case where all drivers are in the same position, the longest gap will reduce. However, (i) This condition holds for exactly the same values of $a$ for both policies, and (ii) It even holds when $a=4$. That is, if all the drivers are concentrated in one half of the interval, the odds are that this gap will increase further in the next iteration, which constitutes a prototypical example of the IGD: it is just too probable that the user will arrive within this gap, and it will be assigned to one of the gap's borders regardless of the policy. This is not yet compensated by the arrival of the next driver.
\end{itemize}

In all, Table \ref{tab:expected_max_gap} shows that although the optimal policy does a better job than greedy in preparing for the future, this improvement is very minor and, more importantly, cannot prevent the IGD.

\subsection{Non-uniform arrivals}
So far, most of this section has assumed that passengers' and drivers' appearances follow a uniform distribution in $[0,1]$. However, in real transport systems there are usually hotspots where users and drivers tend to concentrate. In this subsection we discuss how this generalisation would affect our analysis and results. 

We first note that the overall dynamic remains almost the same. Instead of analysing the length of the intervals, we now need to focus on their probability. That is, an interval (gap) with a greater probability would still be more likely to attract the new users, making the interval to grow. Moreover, we know that Theorem \ref{Thm:NeihbourDrivers} remains valid. That is, even if we used an optimal policy, we would still assign one of the neighbours so that the gap increases. 

However, there is one relevant difference: the expected cost for a user arriving in an interval is determined by its length, whereas the probability of a user arriving there is governed by the interval's probability mass. In the uniform case, both quantities are proportional to the length, but this is not true when the probabilities are not uniform (i.e., Eq. \ref{Eq:CostEqualVar} is no longer valid in this general case). This introduces a random effect on the IGD: consider a situation where all the intervals have the same probability but some begin merging due to the IGD; if these were low-demand intervals, thus lengthy ones, the impact of the IGD is even worse, but the contrary would happen if merging short intervals.  

In all, the IGD still operates, but whether its impact is worse than when probabilities are uniform is unclear. We study this through simulations. We consider three different probability distributions. The first two give more probability to the interval $[0,1/2]$ (the high-demand zone) than to $[1/2,1]$ (the low-demand zone). The third one has a traditional triangular pdf. For each of them, we run Monte Carlo simulations, using 10 drivers and 100 iterations. The initial set of drivers is distributed randomly following the same corresponding probability distribution, so we can compare the final equilibrium with the random one. 

\begin{figure}[ht]
    \centering    \hspace{3.6cm}\includegraphics[width=0.7\linewidth]{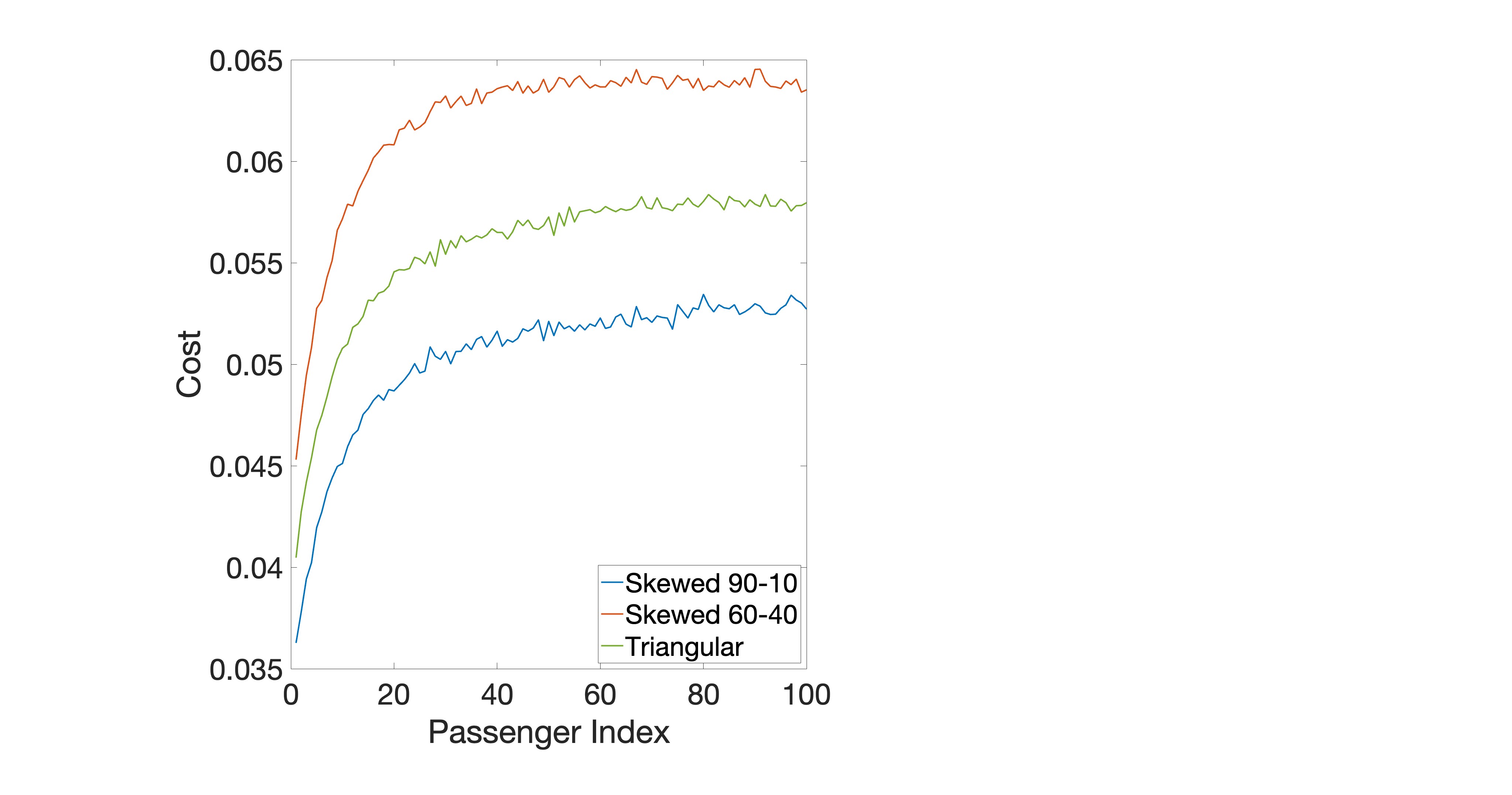}
    \caption{Effects of the IGD dynamics when the arrival of users and drivers is not uniform in space.}
    \label{fig:Comparison of distributions}
\end{figure}

The trends in Fig. \ref{fig:Comparison of distributions} clearly show that the IGD equilibrium is worse than the original setup with drivers randomly located. This is true for all three curves. Moreover, the comparison between the two skewed cases reveal that the more concentrated the drivers, the worse the impact of the IGD, as the relative increase in costs is significantly greater.

In all, the conclusions remain similar to what was discussed in the previous subsections. As a complement, let us remark that in section \ref{Sec:Manhattan} we run simulations using a real-world dataset from Manhattan, where the origins and destinations of the users are not uniform.

\section{Visualisations on the Unit Square}\label{sec:Square}

Let us consider now the (continuous) unit square in two dimensions. Conceptually, the system remains quite simple. But we now explain why an analytical treatment seems beyond reach. Note that what we described as the dominance zone in section \ref{sec:INTRO} corresponds exactly to the Voronoi diagram induced by the vehicles \citep{de2000computational}. This means that in the unit square the dominance zone of every vehicle is a convex polygon, and their geometry now plays a role. Crucially, the probability of assigning a given car is given by the area of its corresponding dominance zone, which is exactly the same as in the single-dimensional case (using the length instead of the area). But the expected cost is now quite different: if the area is regular (e.g. an hexagon or a square), the expected cost will be proportional to the polygon's side, i.e., to the square root of its area. But if the polygon is more irregular, its area becomes less informative. 

The complexity described in the previous paragraph hinders the development of a theoretical analysis. But on the other hand, the square is an ideal environment for visualisations, which can be very helpful for transport analysis \citep{andrienko2017visual}. Let us first remark that the overall dynamics remain quite similar. This can be explained by three key observations. First, it is still desirable to maintain dominance zones of comparable size. Second, larger zones have a higher probability of attracting users. Third, although a zone's area is no longer directly proportional to the expected user cost (as in the one-dimensional case), it does remain positively correlated, since larger zones tend to include points that are farther from the assigned vehicle. This is the version of the IGD dynamics in the square.

\begin{figure}[ht!]
    \centering
    \begin{subfigure}[b]{0.3\textwidth}
        \includegraphics[width=\textwidth]{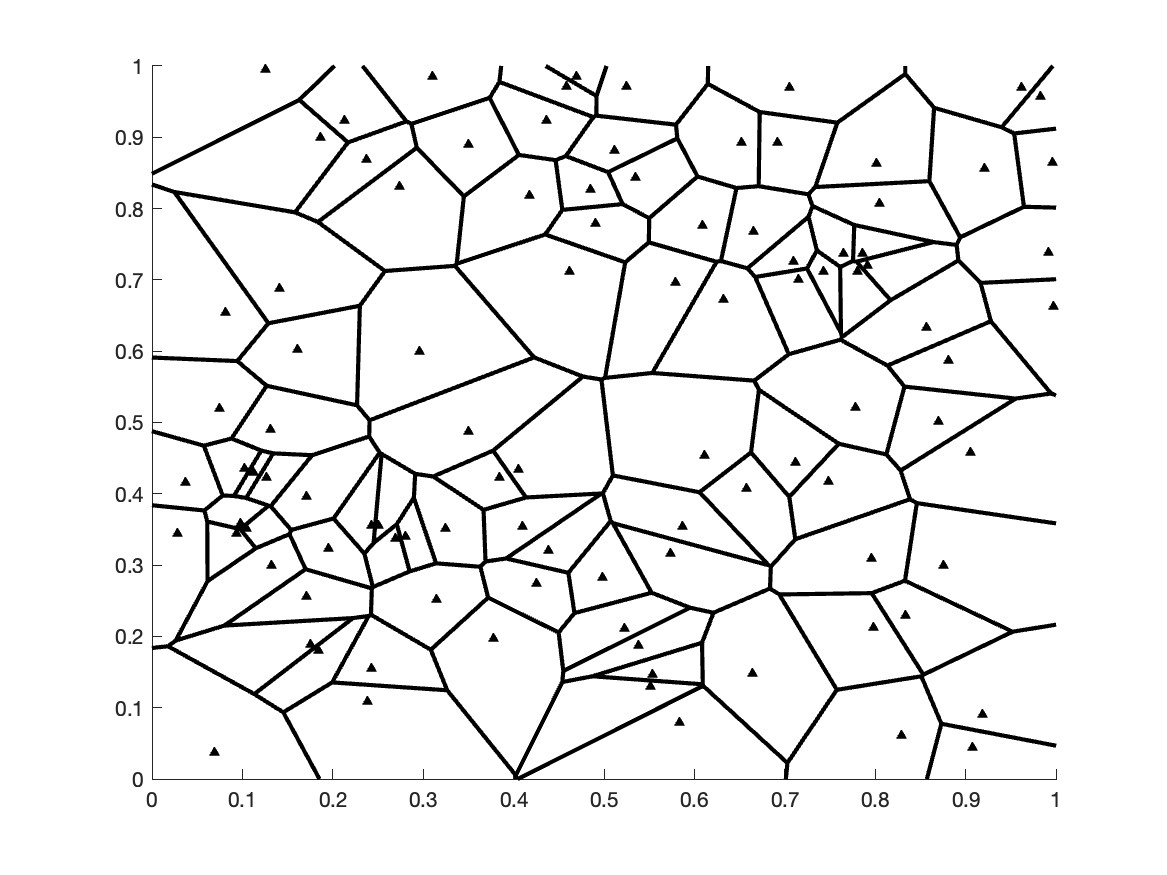}
        \caption{Random distribution}
        \label{fig:2DRandom}
    \end{subfigure}
    \begin{subfigure}[b]{0.3\textwidth}
        \includegraphics[width=\textwidth]{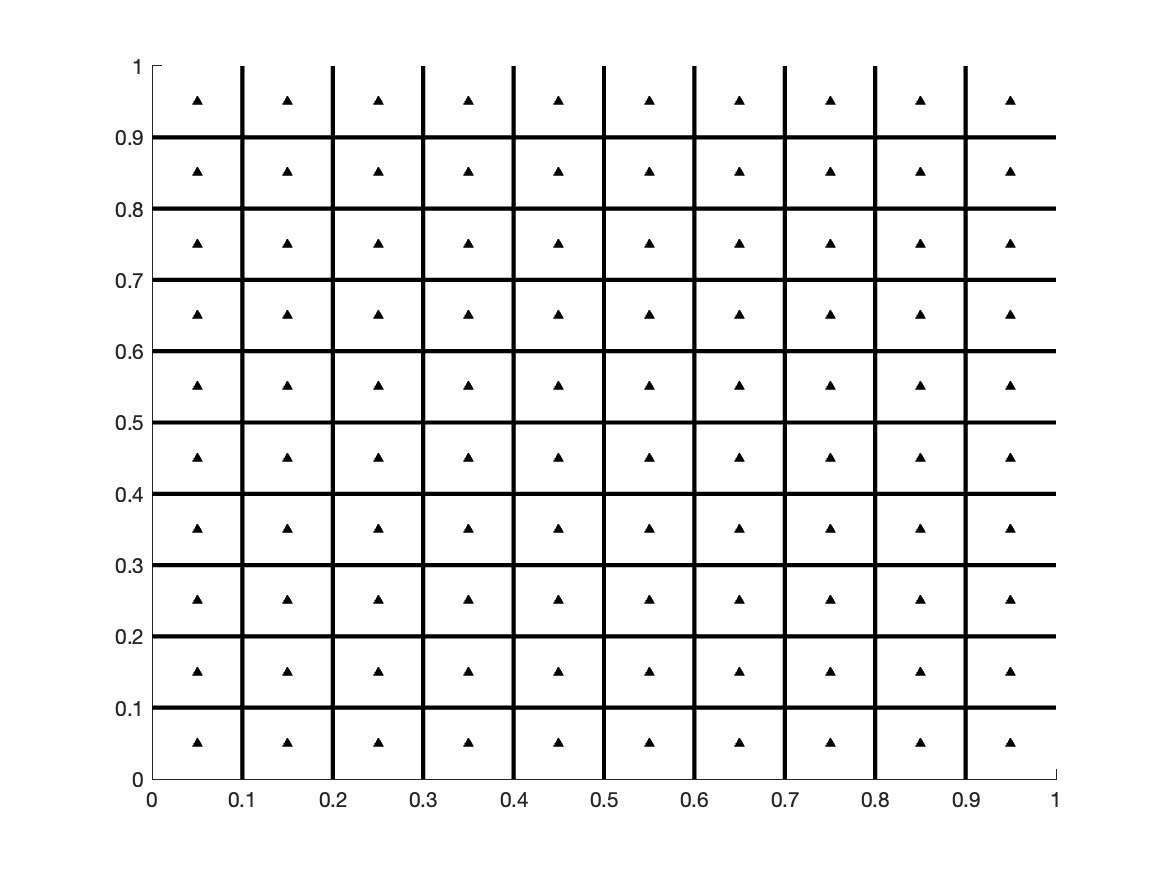}
        \caption{Initial distribution ($t=0$)}
        \label{fig:2DInitial}
    \end{subfigure}

    \begin{subfigure}[b]{0.3\textwidth}
        \includegraphics[width=\textwidth]{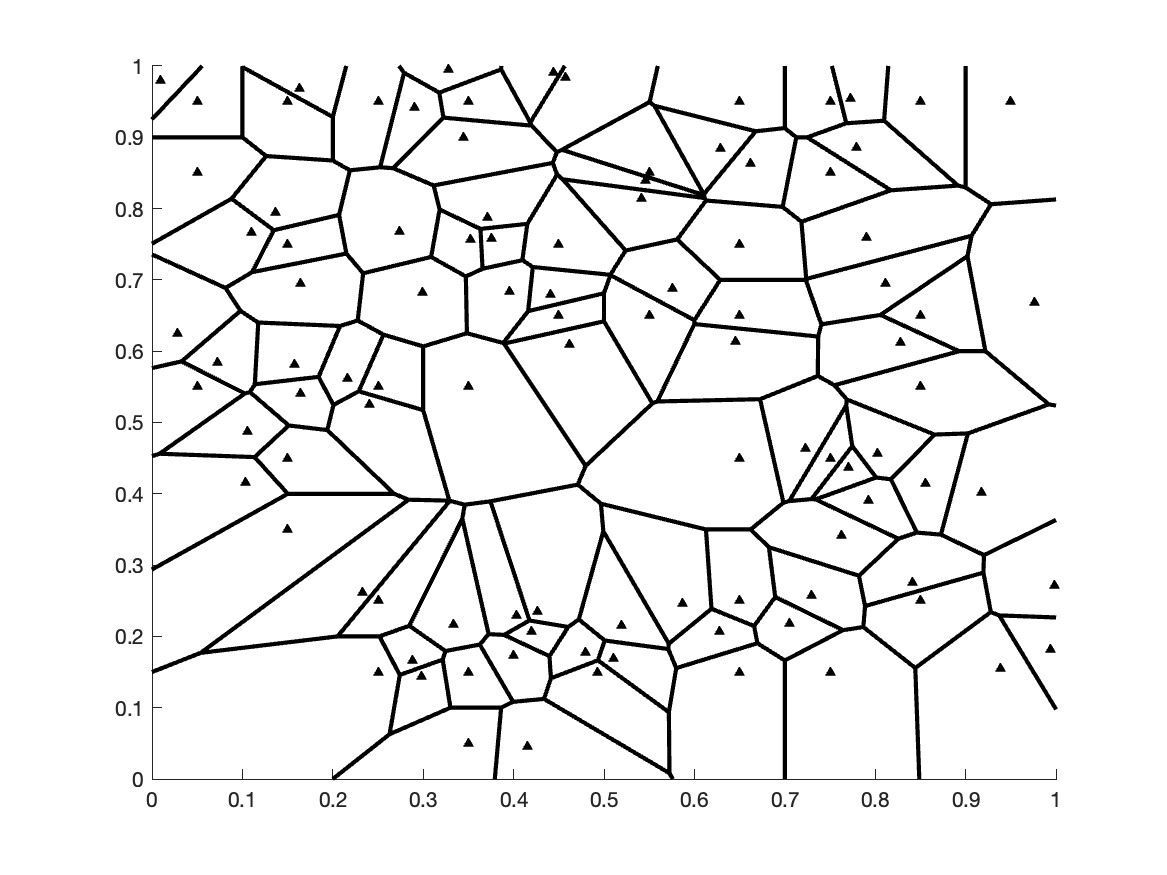}
        \caption{$t=100$}
        \label{fig:2D100}
    \end{subfigure}
    \hfill 
    \begin{subfigure}[b]{0.3\textwidth}
        \includegraphics[width=\textwidth]{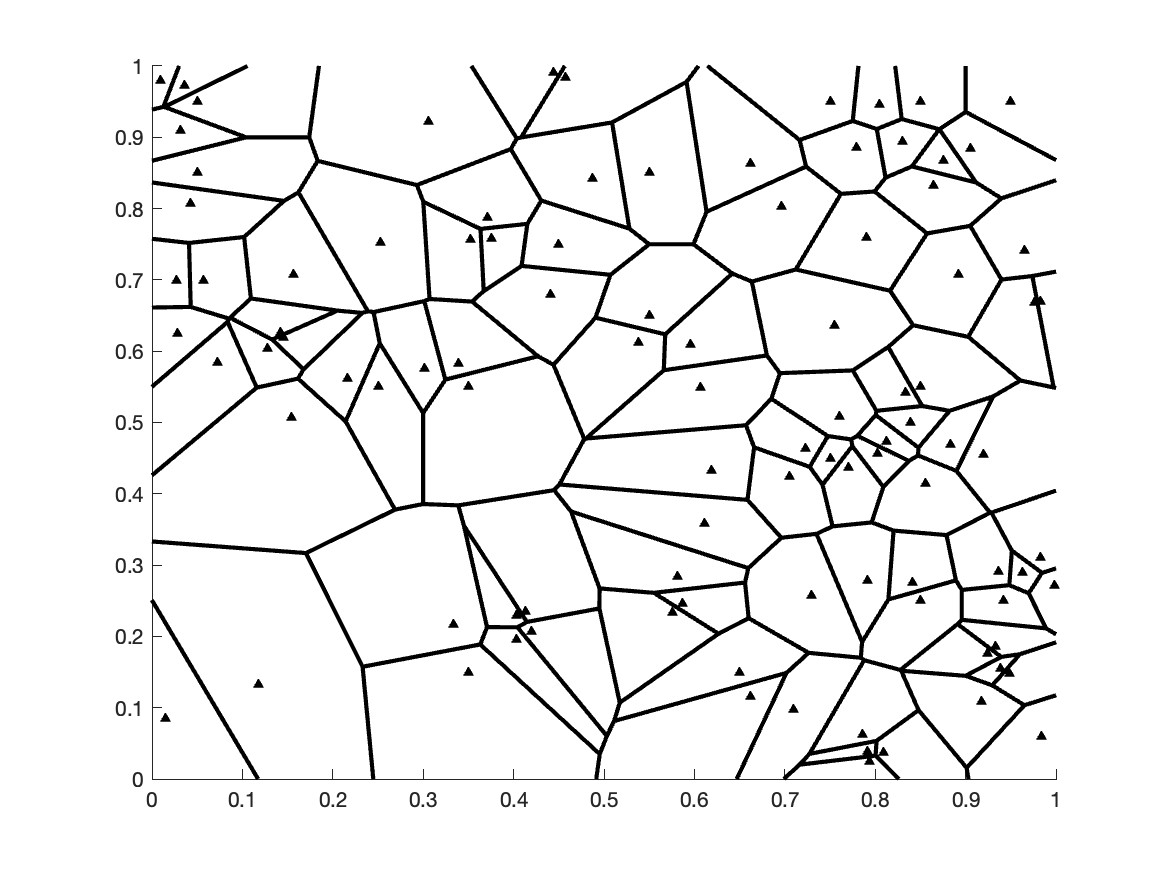}
        \caption{$t=200$}
        \label{fig:2D200}
    \end{subfigure}
    \hfill 
    \begin{subfigure}[b]{0.3\textwidth}
        \includegraphics[width=\textwidth]{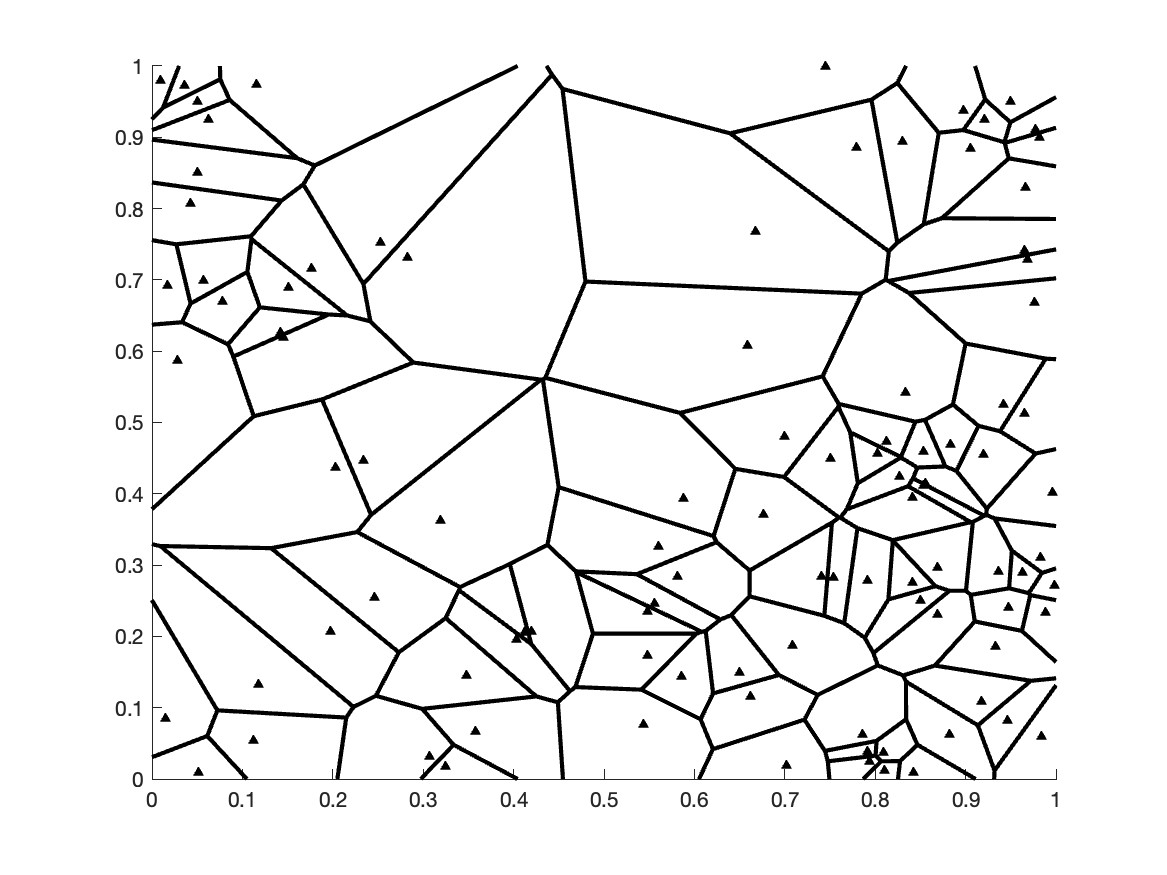}
        \caption{$t=300$}
        \label{fig:2D300}
    \end{subfigure}

    \caption{Evolution of the Markovian system in the unit square with 100 vehicles. Triangles represent the vehicles.}
    \label{fig:Voronoi}
\end{figure}

In order to develop the visualisations, we consider 100 vehicles that begin at the centres of a perfect partition of the square into 100 subsquares. We then run 300 iterations. The resulting process is shown in the video attached to this submission\footnote{The video can be accessed at youtu.be/2caWJjQyvXg}. We further visualise the results in Fig. \ref{fig:Voronoi}, where vehicles are represented as triangles. First, Fig. \ref{fig:2DRandom} shows a benchmark of sorts, namely one realisation of the Voronoi diagram with 100 points randomly distributed in the square. Figs. \ref{fig:2DInitial}-\ref{fig:2D300} show the evolution of the system, from the perfect initial to the distribution to the final one, showing the state of the system every 100 iterations. The main observations from Fig. \ref{fig:Voronoi} are:

\begin{itemize}
    \item The system's distribution keeps worsening over time. While already at iteration 100 the heterogeneity between the zones' areas is evident, this situation is significantly sharper in $t=200$ and $t=300$. Crucially, some very large zones start to appear.
    \item While the random distribution is also quite heterogeneous, its zones are clearly less distinct than those yielded by the IGD. Fig. \ref{fig:2DRandom} shows no huge zones and relatively few very small ones, compared to Fig. \ref{fig:2D300}.
\end{itemize}

This visual analysis can be quantified. To do this, in every iteration we compute the area of every dominance zone, and then calculate the variance of the resulting vector. The results are presented in Fig. \ref{fig:Variance100cars}, where we benchmark against the expected variance if the drivers are distributed randomly (computed through a Monte Carlo simulation). It becomes evident how the variance increases, and how it rapidly exceeds the random one. We note that other simulations might reach lower numbers, but in all of them we observe the same increasing trend and a much greater variance than the random case.

\begin{figure}[ht]
    \centering
    \includegraphics[width=0.4\linewidth]{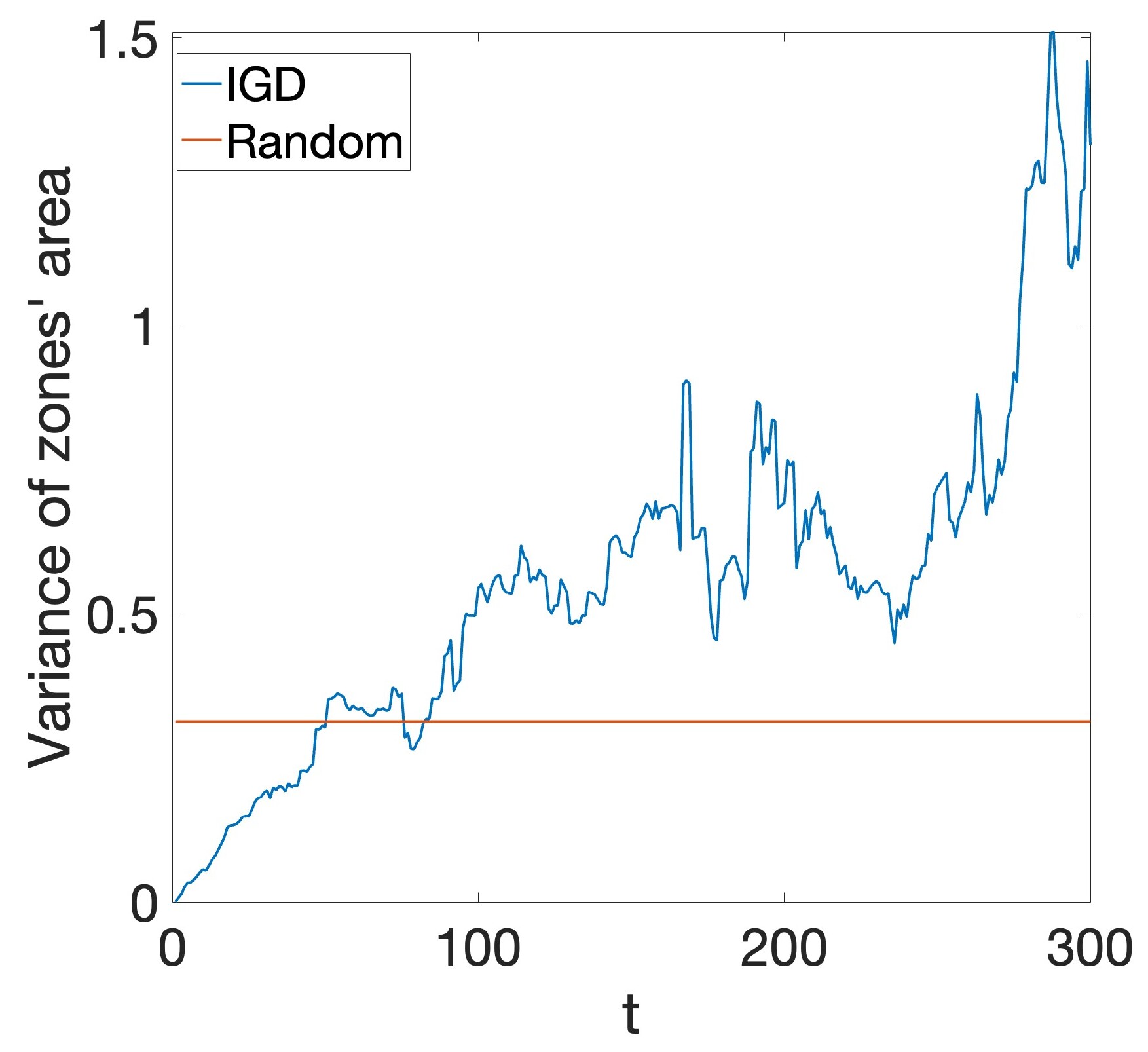}
    \caption{Evolution of the variance of the dominance zones in the unit square.}
    \label{fig:Variance100cars}
\end{figure}

It is worth noting that the sub-optimality of greedy assignments has been studied in a two-dimensional context by \cite{hyland2018dynamic}, who compare it to a matching method in which multiple users are assigned simultaneously (this is known as ``batch assignment'', further discussed and tested in section \ref{Sec:RingRadial}). They found that while greedy can be significantly outperformed, the performance difference narrows substantially as demand increases.

\section{Simulation of a Ride-hailing System and the Emergence of the WGC}\label{sec:Simulations}

The theoretical model permits a clear analysis, but how transferable is this analysis when going to more realistic scenarios? Can we still observe the dynamic described above, i.e., that the heterogeneity of the dominant areas creates a negative cycle? Moreover, when the number of idle vehicles is not constant, but depends on the actual service times, can we observe a relationship between the IGD and the WGC?

In order to study these questions, we run simulations of a ride-hailing system in two different settings. First, we use a ring-radial network (section~\ref{Sec:RingRadial}), where we can control and ensure some regularity in the network, demand, and supply, enabling us to analyse the IGD and WGC in better isolation. Next, we use a real-world dataset (Section~\ref{Sec:Manhattan}) to observe how these dynamics emerge when combined with the many additional aspects and complexities of real cities.

\subsection{Ring-radial network} \label{Sec:RingRadial}
We consider a ring radial network composed by 45 zones, each of them with $5\times 7$ nodes. This network has been previously utilised for the analysis of on-demand mobility, specifically to study scale economies in ride-pooling \citep{fielbaum2023economies}. The network, although with fewer zones, is illustrated in Fig. \ref{fig:ringradial}. All arcs are bidirectional and are traversed in half a minute. Requests are created randomly but keeping the distance between origin and destination within a narrow interval (between 5.8 and 6.2 minutes), in order to limit the relevance of randomness. We have one request every 30 seconds, and each request is assigned immediately upon appearing. Note that it is no longer true that every time a vehicle is assigned, another one becomes available, as in the theoretical model. Instead, a vehicle is assigned to a request, and it becomes again available after the drop-off. We consider 120 minutes of simulation, with part of the vehicles beginning with a user onboard as the result of a 30 minutes warm-up phase (that starts with one vehicle at the centre of each zone), and the remaining perfectly distributed in the network. The experiment is repeated 100 times and we report the average results in Fig. \ref{fig:simulationsUniform}.

\begin{figure}[ht]
    \centering
    \includegraphics[width=0.3\linewidth]{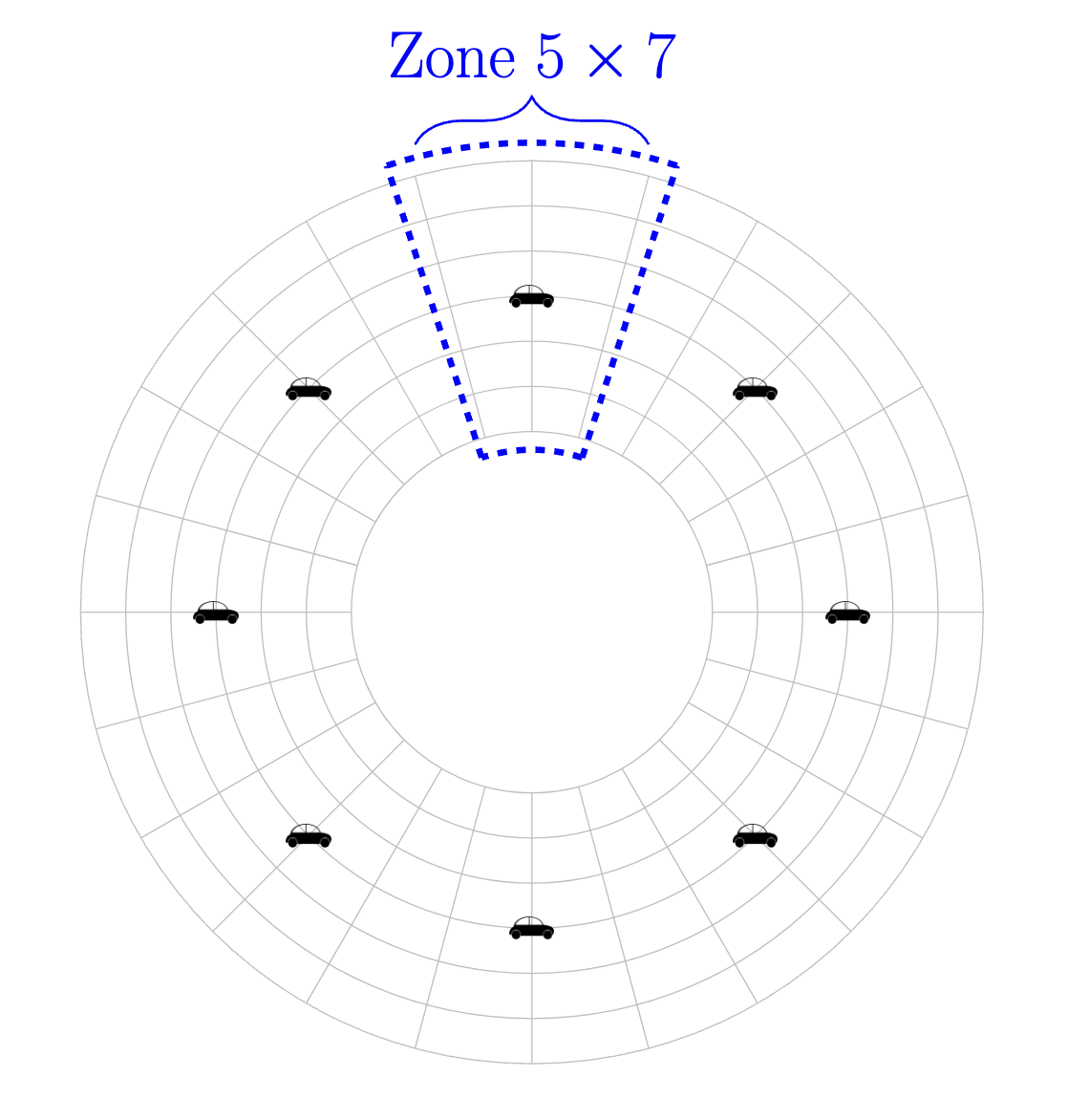}
    \caption{An illustration of the network, and the starting position of the vehicles, used for the simulations.}
    \label{fig:ringradial}
\end{figure}

Fig. \ref{fig:simulationsUniform} shows three panels. On the left, we show the average waiting time of the passengers, i.e., the exact cost function we have been considering. At the centre, we show the number of idle vehicles. Previous models that do not consider the evolution of the position of the vehicles have frequently assumed an inverted relationship between the two, e.g., $Waiting=Idle^{-\alpha}$ for some $\alpha>0$ \citep{yan2020dynamic}. Remarkably, at the beginning of our simulations (after a few minutes of adjustment), the waiting time goes up even though the number of idle vehicles also increases. The reason for this apparent contradiction lies precisely on the non-uniformity of the idle vehicles' position (i.e., the IGD), which is reported at the right and measured as follows: for every idle vehicle $v$, we compute the number of nodes that belong to the dominance zone of $v$, we calculate the variance of the resulting vector, and normalise.

\begin{figure}[ht]
    \centering
    \includegraphics[width=0.8\linewidth]{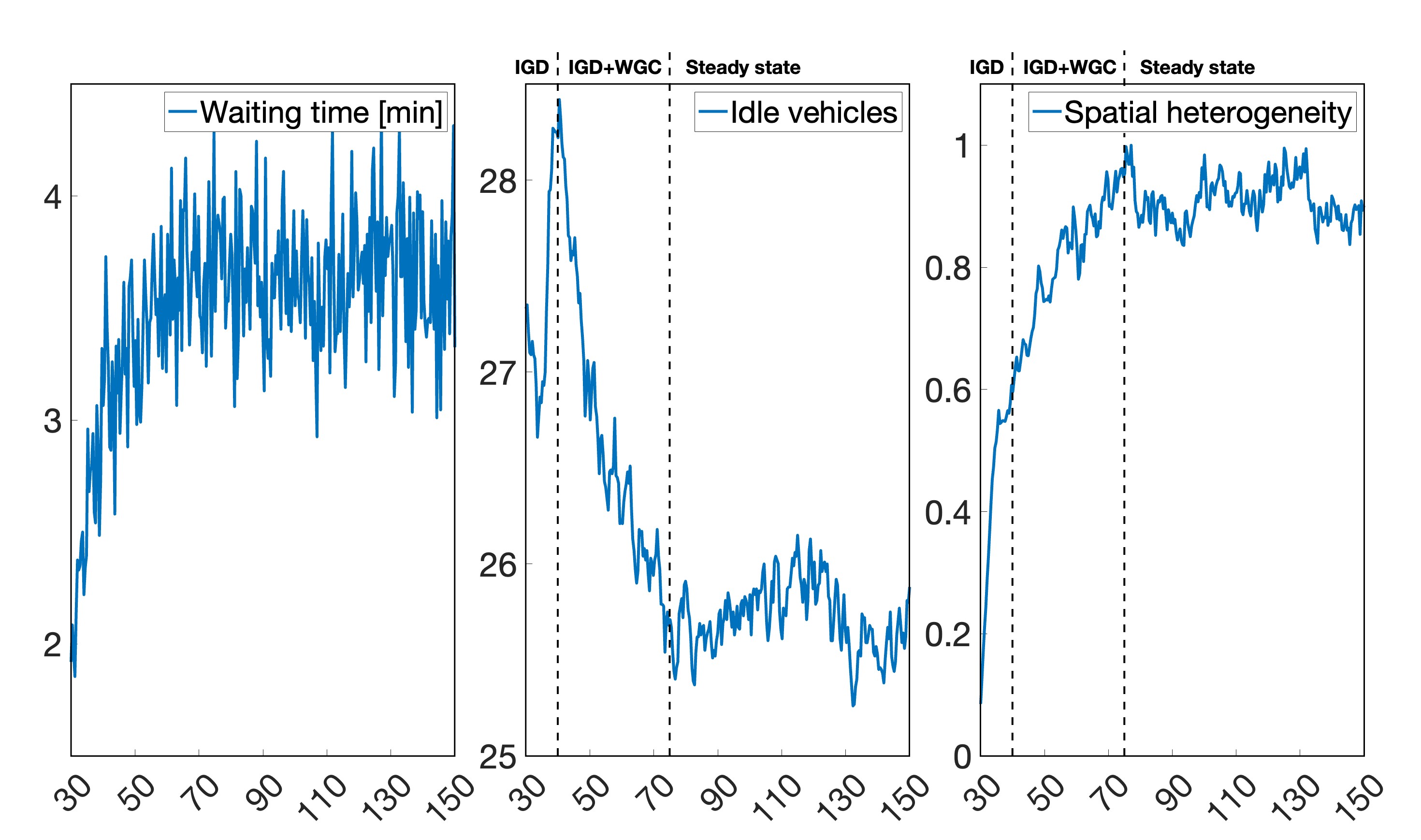}
    \caption{Simulation results on the circular grid.}
    \label{fig:simulationsUniform}
\end{figure}

We have divided the second and third panels of Fig. \ref{fig:simulationsUniform} into three distinct phases:

\begin{enumerate}
\item In the first phase, the number of idle vehicles increases, yet the waiting time also increases, indicating that the IGD is the dominant mechanism.
\item In the second phase, the number of idle vehicles decreases while spatial heterogeneity continues to increase. This means that both the IGD and the WGC are acting simultaneously.
\item In the final phase, the system converges to a steady state and the observed metrics stabilise.
\end{enumerate}

These results remain the same if we use \textit{batch assignment} \citep{yan2020dynamic,ramezani2023dynamic}. That is, instead of assigning a passenger each time it appears, we wait for some time to collect more requests, and then assign them all together. In this case, we assign every six minutes. The idea is that by collecting more information, the assignment decision will be better (less greedy). As our dynamic is related to assignments and how current decisions affect the future state of the system, it is worth analysing if assigning by batches might eliminate (or mitigate) the IGD. 

As shown in Fig. \ref{fig:simulationsBATCH}, the trends are consistent with those observed in Fig. \ref{fig:simulationsUniform}. The numbers do change compared to the case where passengers are assigned right away: Waiting times increase, as some passengers have to wait before being assigned, and the number of idle vehicles also increases, reflecting that the distance between users and their assigned drivers does decrease. However, the main feature from Fig. \ref{fig:simulationsUniform} also appears here: at the beginning of the simulations, the waiting time increases (Fig. \ref{fig:simulationsBATCH} left) even though the number of idle vehicles increases as well (Fig. \ref{fig:simulationsBATCH} centre), which is explained because the distribution of the vehicles gets worse (Fig. \ref{fig:simulationsBATCH} right).

\begin{figure}[h!]
    \centering
    \includegraphics[width=0.8\linewidth]{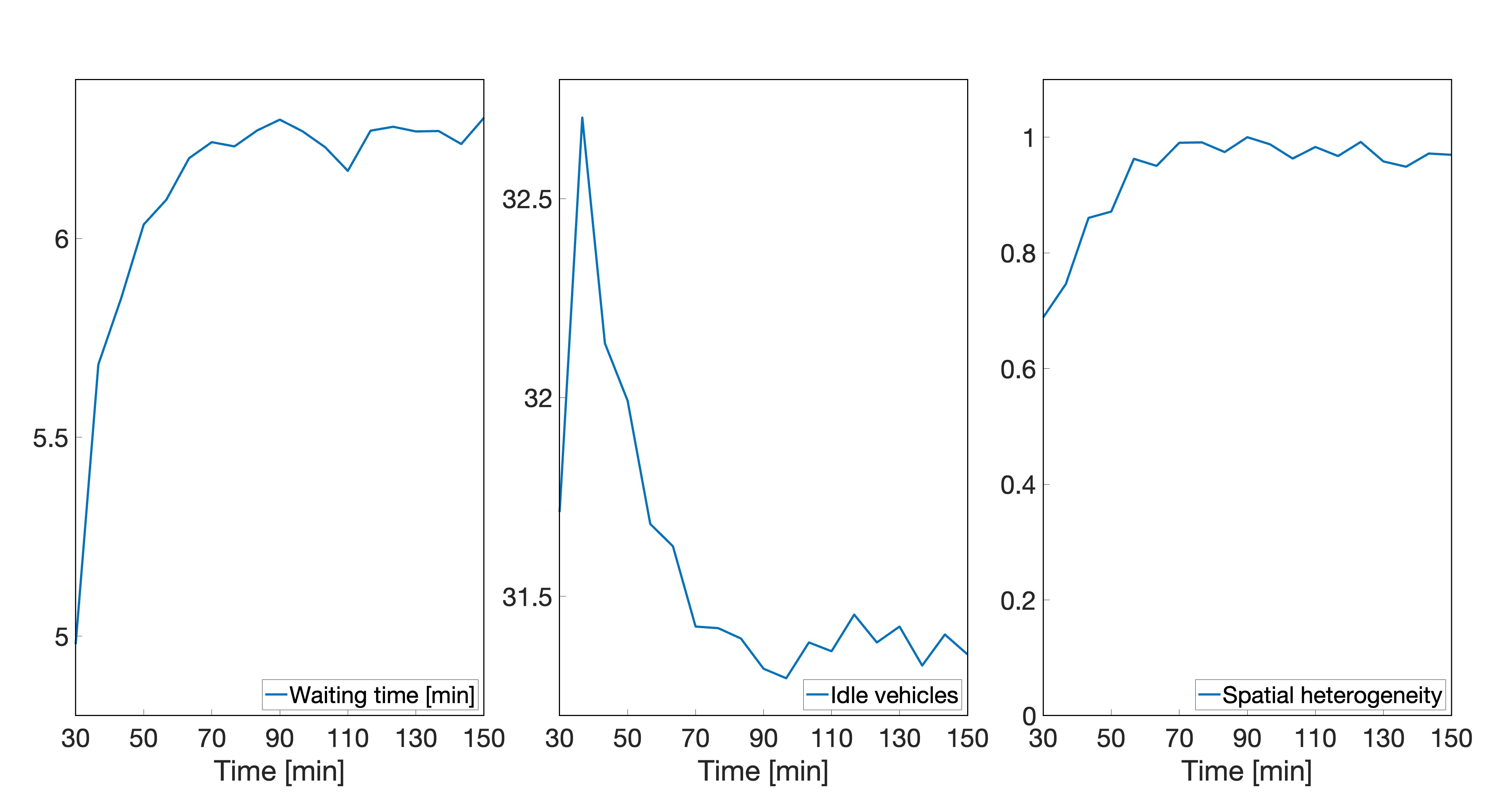}
    \caption{Results of the simulations on the circular grid when the assignmend is batch-based.}
    \label{fig:simulationsBATCH}
\end{figure}

\subsection{Simulations on a real-world dataset from Manhattan} \label{Sec:Manhattan}
We now analyse the emergence of the IGD in a realistic context, namely the widely used dataset from Manhattan. We consider a subnetwork of the Manhattan roadmap, first proposed by \cite{zhang2025walking}, to work over a more homogeneous context by removing the areas with very low demand. This network is composed by 1,966 nodes and 4,235 edges, as depicted in Fig. \ref{fig:ManhattanMap}.

\begin{figure}
    \centering
    \includegraphics[width=0.35\linewidth]{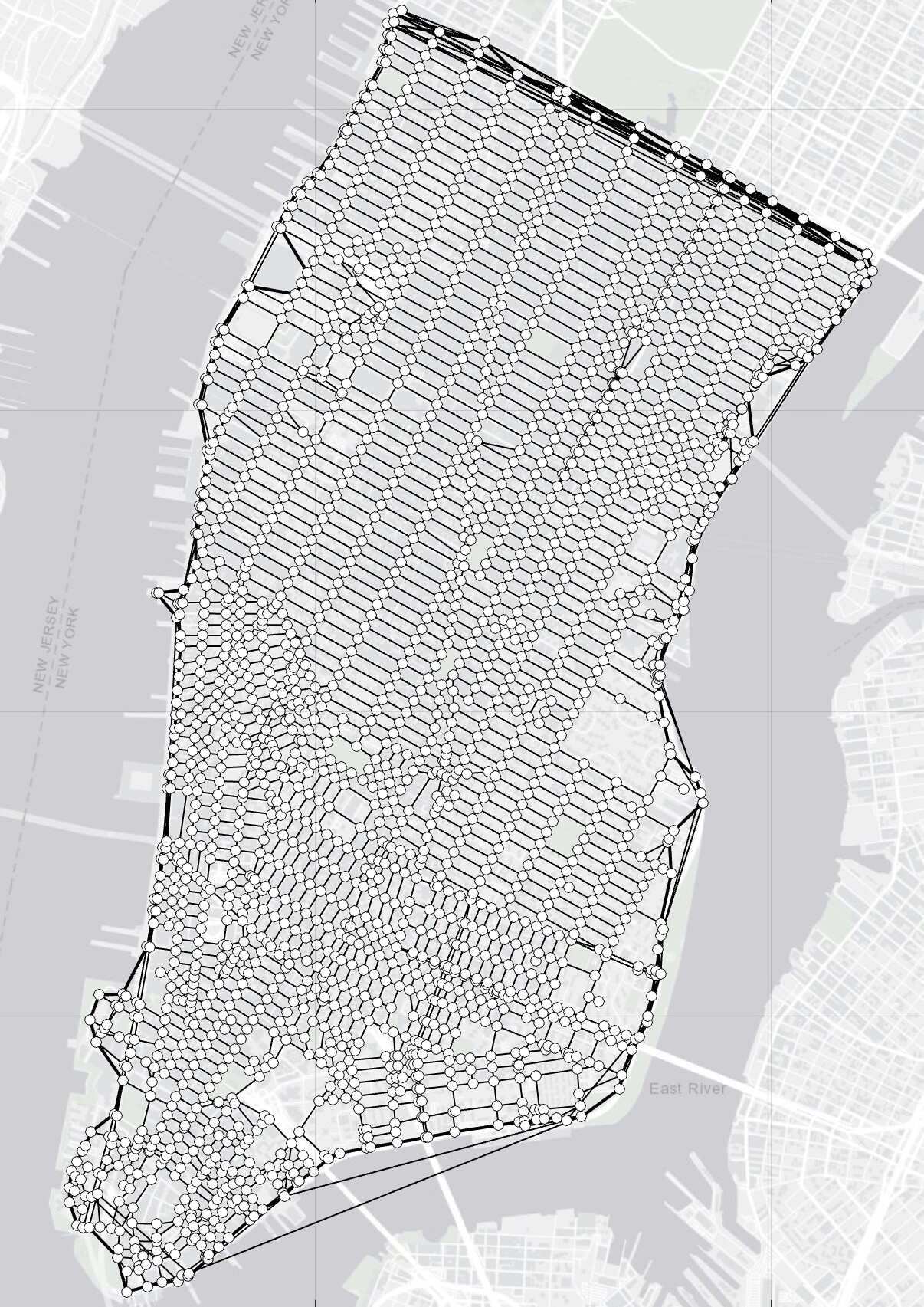}
    \caption{The subnetwork of Manhattan used in the simulations. Source: \cite{zhang2025walking}}
    \label{fig:ManhattanMap}
\end{figure}

We consider one hour of data from the morning peak, which comprises 14,213 requests, and a fleet of 1,500 vehicles\footnote{Formally, we redistribute these requests within 5 hours. This is done because we need a fleet that is large enough to ensure no rejections. This would require approximately 5,000 vehicles if not redistributing the requests, implying that almost every node would have a vehicle, and some several, which would hinder the analysis of the dominance zones.}. Requests are assigned as soon as they emerge. We implement a warm-up phase of 15 minutes, after which the idle vehicles begin their journey ``uniformly'' distributed: we use a k-medoids clustering method and situate the vehicles in the resulting centres\footnote{This clustering method is very appropriate for this context, as it can consider arbitrary distances and returns centres that are nodes in the network, as opposed to k-means that is based on the Euclidean distance and finds clusters but not explicit centres \citep{kaufman2009finding}.}.

Crucially, the demand is no longer spatially homogeneous, as some nodes might generate or attract more trips than others. Moreover, the length of every trip might differ. These changes hinder an analytical treatment, but test whether our theoretical results remain valid under this data-informed configuration. The fleet size was chosen to be large enough to ensure that all requests can be served, as otherwise, selecting which ones to reject could bias the results.

We compute the same set of results as for the previous experiments, shown in Fig. \ref{fig:ResultsManhattan}. In this case, the waiting times present a very large variation; moreover, it frequently reaches zero due to the high density of the vehicles in the network, that implies that sometimes a request would appear in a node that already has an idle vehicle. This is why we include a moving average, computed every 100 consecutive requests, to provide a cleaner interpretation. Crucially, we observe exactly the same trends as in the circular grid: the waiting time tends to increase (Fig. \ref{fig:ResultsManhattan} left) until converging. This is true even at the beginning of the simulations, where the number of idle vehicles increases as well (Fig. \ref{fig:ResultsManhattan}, disregarding the initial drop due to the inertia from the warm-up phase). The explanation lies in the IGD, as revealed by the increase in the spatial heterogeneity of the idle drivers (Fig. \ref{fig:ResultsManhattan} right).

\begin{figure}[ht]
    \centering
    \includegraphics[width=0.8\linewidth]{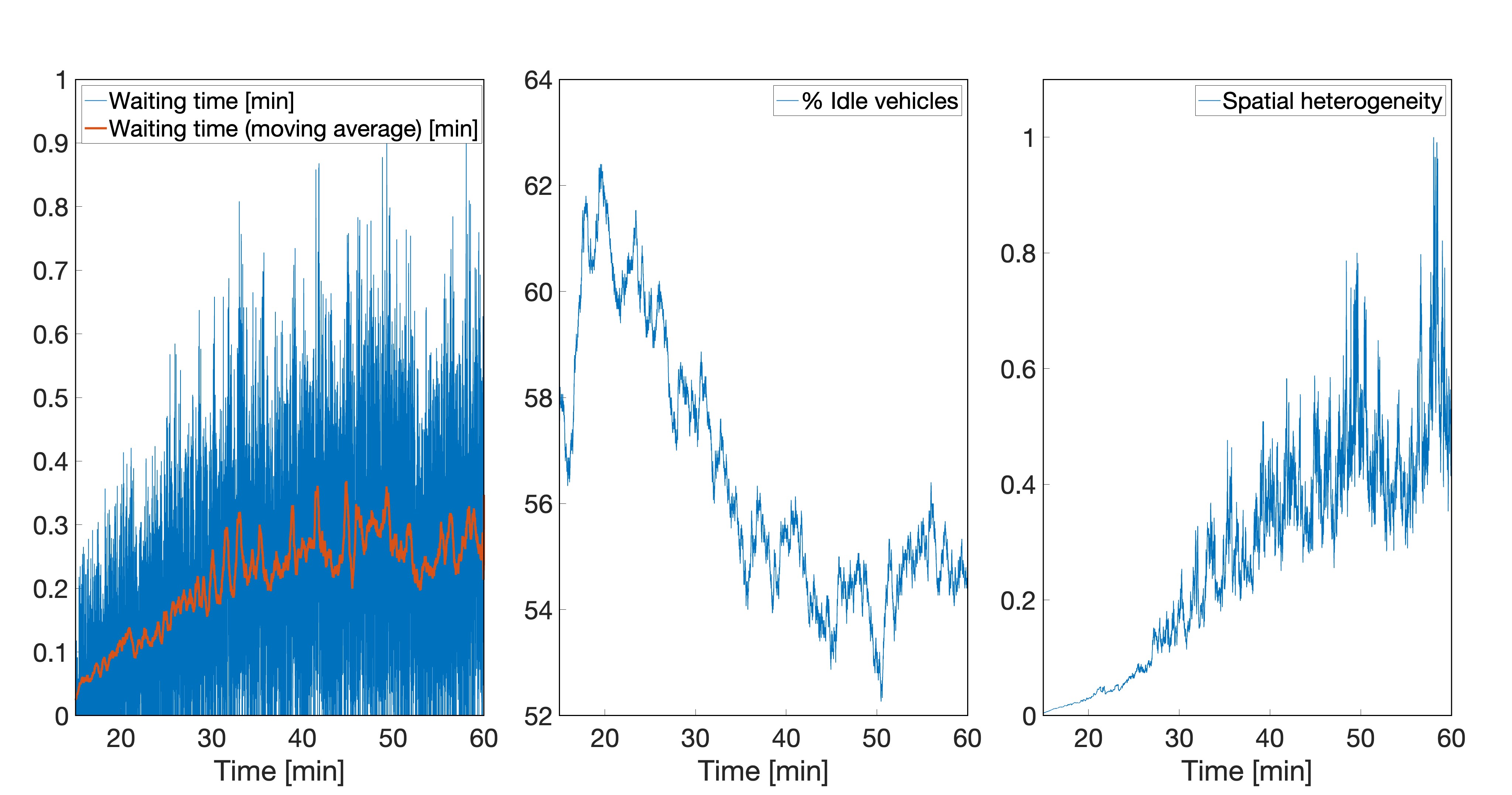}
    \caption{Results of the simulations on Manhattan. }
    \label{fig:ResultsManhattan}
\end{figure}

\section{Summary of the spatial dynamics under online matching}\label{Sec:SummaryDynamics}
Let us go back to the general spatial model, combining our theoretical findings with previous studies and the insights from the simulations. How do the IGD and the WGC relate and affect the spatial distribution of the servers? How does the distribution of the servers at equilibrium look like?

All of our analysis depict the following dynamics, illustrated in Fig. \ref{fig:SchemeIGD-WGC}. There are two feedback cycles, the IGD and the WGC. Both cycles are first triggered by the IGD. To be more specific, and assuming that the servers begin perfectly distributed:

\begin{enumerate}
    \item The random arrival of users and servers perturbs the original uniform distribution so that idle servers become unevenly distributed, which directly increases the average distance between users and servers.
    \item As explained by the IGD, the large dominance zones are more likely to attract users, so the servers become even less balanced - namely, because every user that appears in an already large dominance zone increases this dominance zone further.
    \item  The increased distance between users and servers implies that drivers take a longer time to become idle again, which reduces the number of idle servers, as described by \cite{castillo2017surge}.
    \item Having fewer idle servers obviously further increases the distance between a user and their assigned server. 
\end{enumerate}

\begin{figure}[ht]
\centering
  \makebox[\textwidth]{ 
    \resizebox{0.8\textwidth}{!}{ 
      \usetikzlibrary{arrows.meta, positioning}

\begin{tikzpicture}[
    state/.style={rectangle, rounded corners, draw=black, fill=green!20, text width=3.5cm, text centered, minimum height=1.2cm, font=\large},
    arrow/.style={draw, -{Latex[length=4mm,width=3mm]}, thick},
    label/.style={font=\normalsize, midway, fill=white, inner sep=1pt}
]
\hspace{-2cm}
\node[state] (s1) {Uniform distribution of idle servers};
\node[state, right=3cm of s1] (s2) {Idle servers become uneven};
\node[state, right=3cm of s2] (s3) {Increase in user-server distance};
\node[state, right=3cm of s3] (s4) {Fewer idle servers};

\draw[arrow] (s1) -- node[label, above] {Randomness} (s2);
\draw[arrow] (s2) -- node[label, above] {Direct effect} (s3);
\draw[arrow] (s4) -- node[label, above] {Direct effect} (s3);

\draw[arrow, bend right=45] (s3) to node[label, above] {Increasing Gaps Dynamics} (s2);
\draw[arrow, bend left=45] (s3) to node[label, above] {WGC Dynamics} (s4);

\end{tikzpicture} 
    }
  }
  \caption{Summary of the spatial dynamics in ride-hailing and similar systems.}
  \label{fig:SchemeIGD-WGC}
\end{figure}
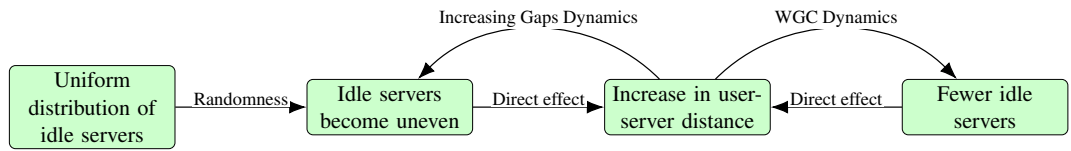

These feedback cycles degrade the system's quality of service until a new probabilistic equilibrium is reached. This equilibrium is not only worse than a servers' perfect distribution, but significantly worse than a fully random distribution.

As discussed in section \ref{sec:Greedy?} (where we discuss the suboptimality of the greedy policy), and reinforced in our experiments using batch assignment (section \ref{Sec:RingRadial}), there is no improvement to the matching algorithm that could substantially modify these dynamics. On the other hand, a proactive rebalancing of the idle servers could help maintain a better distribution. However, two caveats must be noted: first, under high-demand circumstances, the impact of rebalancing might be mild  because the percentage of idle servers remain low (\citealt{fielbaum2022anticipatory}); second, rebalancing is relatively simple in ride-hailing, but in other instances of spatial matching this is no longer the case, such as the examples given in the introduction, i.e., dockless bike-sharing and on-street parking when delivering.

It should be noted that WGC models (e.g. \citealt{castillo2017surge,li2025wild}) have typically identified that for a given fleet and demand rate, there are two possible equilibria, the ``good one'' with low pickup times and the WGC equilibrium with large pickup times. However, these models do not take into account how regions become spatially heterogenous, i.e., the IGD. What the dynamics summarised in Fig. \ref{fig:SchemeIGD-WGC} reveal is that, because of the IGD, pickup times will tend to increase from the ideal situation (the good equilibrium), and therefore the idle number of vehicles will diminish leading to the WGC equilibrium. This is also the interpretation of Theorem \ref{thm:CostsConverge}: there is a unique probabilistic limit in these processes.

\section{Conclusions}\label{sec:Conclusions}
Numerous emerging transport systems rely on some form of online matching between a user and the server (vehicle) it will utilise to travel.  Some traditional ones, including some examples of parking, are not done online but still follow a similar process. This \textit{spatial matching} is subject to particular dynamics that explain the temporal evolution of the spatial distribution of the servers. In this paper, we have identified and described the \textit{Increasing Gap Dynamics} (IGD), namely, that once the servers becomes unevenly distributed, those covering greater gaps are more likely to be assigned, further increasing those gaps. Through a number of analytical and experimental results, we have shown that the servers' distribution converges to a probabilistic equilibrium that is worse not only than a perfect distribution, but also than a fully random one.

Most of our theoretical analysis has been done over a circle and assuming a greedy assignment rule. Therein, we formally prove that a random distribution tends to become worse and that this happens exactly because of the IGD. Moreover, we have also proved that the optimal assignment rule will always match a user to one of its neighbouring servers, which suffices for the IGD to appear. We have also provided very evident visualisations of the phenomenon in a unit square. Simulations of a proper ride-hailing system using real-world data have shown not only that the IGD occurs, but also that it triggers the well-known \textit{Wild Goose Chase} (WGC).

As this is an emerging topic, there are numerous directions for further research. At the theoretical level, a formal bound on the relationship between the optimal and greedy algorithms on the circle remains an open problem. Similarly, studying other policies in more complex environments such as a graph or the unit square (e.g., the hierarchical greedy proposed by \citealt{kanoria2021dynamic}) could provide relevant insights into the evolution of the IGD in real-life scenarios. From an application perspective, while our results show that the IGD cannot be fully prevented, a crucial question emerges, namely, what remedial measures can be implemented to improve the efficiency and quality of service of the numerous systems that can be represented by this general spatial model. Finally, the models have been done for \textit{Assigned} modes, i.e., where users and vehicles are matched online (\citealt{fielbaum2025coordination}): would the IGD still appear for \textit{Onsite} modes, such as taxis, where the servers and users move randomly till finding each other?

\section*{Acknowledgments}
Dr Andres Fielbaum has been partially funded by the Australian Research Council (ARC) Discovery Early Career Researcher Award (DECRA) DE250100417. The research of Roberto Cominetti was partially supported by FONDECYT 1241805.
The idea of this paper was originally discussed at the Dagstuhl Seminar 24281 on Dynamic Traffic Models in Transportation Science.

\bibliographystyle{apalike} 
\bibliography{reference}

\section*{Appendix}

\subsection*{Lemma \ref{Lemma:Ergodic} and Theorem \ref{thm:CostsConverge} in the continuous case}

 In this section we establish the convergence of the greedy assignment for the stochastic dynamics in the case where drivers and users are distributed over a sufficiently regular continuous space $X$. 
Specifically, we consider  $X$ to be a compact subset of $\mathbb{R}^d$ with nonempty interior, endowed with its $d$-dimensional Lebesgue measure denoted by $\mu$ (i.e., the usual length if $d=1$, area if $d=2$, volume if $d=3$, etc.). We assume that $X$ is regular in the sense that for all $x\in X$ and $r>0$ we have
$\mu(X\cap B(x,r))>0$.
Since this measure varies continuously with $x$ over the compact set $X$,  for every fixed $r>0$ we can find $\alpha(r)>0$ such that 
$\mu(X\cap B(x,r))\geq\alpha(r)$
uniformly for all $x\in X$.
We also suppose that the probabilities
$P$ and $Q$ governing the appearance of users and drivers over the ground set $X$ have densities with respect to $\mu$, 
and that there exists $\rho>0$ such that for every Borel set $A\in\beta(X)$ we have $P(A)\geq\rho\mu(A)$ and $Q(A)\geq\rho\mu(A)$ (in particular this hold when the densities are continuous functions because $X$ is compact). 
We denote by $P_N(\cdot)$  and $Q_N(\cdot)$ the corresponding $N$-fold product probability measures on the space $X^N$,
and by $P(s,C)=\mathbb{P}(S^{t+1}\in C|S^t=s)$ the one-step transition kernel of the chain for each state $s\in X^N$ and every Borel set $C\in\beta(X^N)$. 
Further, we denote $D=\{s \in X^N: \exists i\neq j, s_i=s_j\}$ the exceptional set of $N$-tuples where two or more drivers coincide.

In order to extend Lemma \ref{Lemma:Ergodic} and Theorem \ref{thm:CostsConverge} to this continuous setting, we use Theorem 13.3.3 in \cite{meyn2012markov}, which provides sufficient conditions ensuring that the chain has a unique invariant probability measure $\pi(\cdot)$ and that 
for every initial state $s\in X^N$
the $t$-step 
distribution $\PP(S^t\in C|S^1=s)$ converges in total variation towards $\pi(C)$. Namely, this is guaranteed as long as the chain is:
\vspace{-1ex}

\begin{itemize}
\item[\bf a)] {\em $\varphi$-irreducible}: that is, there exists a non-zero measure $\varphi$ on $\beta(X^N)$ such that for every state $s \in X^N$ and each Borel set $C\in \beta(X^N)$ with  $\varphi(C)>0$ we have $\PP(S^t\in C|S^1=s)>0$ for some $t\geq 1$ (see Proposition 4.2.1\,(ii) in \citealt{meyn2012markov});\vspace{-1ex}

\item[\bf b)] {\em Harris recurrent}: that is, every Borel set $C\in\beta(X^N)$ with $\psi(C)>0$ satisfies $P_x(\Gamma_C=\infty)=1$, where $\Gamma_C$ is the number of times $C$ is visited and $\psi$ is the maximal measure  described in Proposition 4.2.2 in \cite{meyn2012markov};
\vspace{-1ex}
\item[\bf c)] {\em Aperiodic}: the maximum $m$ for which an $m$-cycle exists is $m=1$. An $m$-cycle is a partition of $X^N$ into $m$ subsets $C_1,\ldots,C_m$, such that for all $s \in C_i$ we have $ P(s, C_{i+1})=1$; and
\vspace{-1ex}
\item[\bf d)] {\em Positive}: there exists an invariant probability measure.
\vspace{-1ex}
\end{itemize}
In order to show that these conditions hold in our setting, we will exploit the following technical fact.

\begin{lemma}\label{Lemma:TechnicalAppendix}
    Let $C\in \beta(X^N)$ be any  Borel set with  positive Lebesgue measure $\mu_N(C)>0$. Then, there exists $\varepsilon>0$, which depends only on $C$, such that  $\PP(S^{2N+1}\in C | S^1=s) \geq \varepsilon$ uniformly for  $s \in X^N$.
    
\end{lemma}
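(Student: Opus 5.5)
The plan is a two-phase steering argument over the $2N$ transitions from $S^1$ to $S^{2N+1}$. In the first $N$ steps, the chain is driven from an arbitrary $s$ into a neighbourhood of a fixed \emph{canonical} configuration. In the last $N$ steps, it is driven from that neighbourhood into $C$. Each step is controlled by requiring the user $x_t$ to fall in a small ball and the new driver $y_t$ to fall in a small set. By the assumptions of the Appendix ($P(A),Q(A)\ge\rho\mu(A)$ and $\mu(X\cap B(x,r))\ge\alpha(r)$), each such event has probability bounded below by a constant that does not depend on the current state. Multiplying the $2N$ lower bounds gives $\varepsilon$, and it depends only on $C$ and the chosen radii.

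First I fix the target inside $C$. Since $\mu_N(D)=0$ and $\mu_N(C)>0$, the Lebesgue density theorem gives a density point $c=(c_1,\dots,c_N)\in C\setminus D$ of $C$. It also gives a radius $\eta>0$ such that the balls $B(c_k,\eta)$ are pairwise at distance at least $10\eta$ and
\[
\mu_N\bigl(C\cap \textstyle\prod_k B(c_k,\eta)\bigr)\ge \tfrac12\,\mu_N\bigl(\textstyle\prod_k B(c_k,\eta)\bigr).
\]
Next I choose canonical points $z_1,\dots,z_N\in X$ and a radius $r\le\eta$ such that all $2N$ balls $B(z_j,r)$ and $B(c_k,\eta)$ are pairwise separated by a margin of at least $10r$. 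This is possible because $X$ has nonempty interior. All of these choices depend on $C$ only.

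Phase 2 runs from a configuration with driver $k$ in $B(z_k,r)$ for every $k$. At the $k$-th step of this phase, I require the user to land in $B(z_k,r)$ and the new driver to land in $B(c_k,\eta)$. Because of the separation margin, the closest driver to such a user is driver $k$, so greedy removes exactly that driver. Since replacement is in place, coordinate $k$ then moves into $B(c_k,\eta)$ while every other coordinate stays in its ball. After $N$ steps the product of the driver events has $P_N$-mass at least $\rho^N\mu_N\bigl(C\cap\prod_k B(c_k,\eta)\bigr)>0$ inside $C$, and the user events contribute a factor of at least $(\rho\,\alpha(r))^N$.

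Phase 1 is the main obstacle, because $s$ is arbitrary. Drivers may coincide, or may sit inside or near the canonical balls, so it is not clear which driver greedy removes. My approach is to use no index bookkeeping at all and to track only the \emph{set} of driver locations, since the final check in Phase 2 can be done after a harmless relabelling; I would still have to verify that this relabelling is compatible with $C$ being a set of ordered tuples. At step $i$ I place the user within a tiny radius $\delta$ of some driver not already in $U_i=\bigcup_{j<i}B(z_j,r/2)$, and the new driver in $B(z_i,r/2)$. The worry is that greedy removes a driver already placed in $U_i$. This happens only if the chosen unplaced driver lies within $2\delta$ of $U_i$. To rule it out I would use a shrinking hierarchy of radii: the $i$-th placed driver goes into a ball of radius $r_i$, with $r_1\gg r_2\gg\dots\gg r_N\gg\delta$. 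I always choose the unplaced driver that is farthest from the already placed balls. If every unplaced driver is within $2\delta$ of a placed ball, then a pigeonhole and separation argument lets me instead target the user at that cluster so that an unplaced driver is the unique nearest one.

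If this case analysis becomes too delicate, I would fall back on making the Phase 1 targets $z_i$ depend on $s$ only through a finite cover of the compact set $X^N$, so that only finitely many choices, and hence finitely many $\varepsilon$'s, arise. I would then take their minimum.
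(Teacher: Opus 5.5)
Your target construction around a density point $c\in C\setminus D$ and your Phase 2 are sound. They essentially coincide with the paper's reduction to a product set and its second step. The relabelling worry is a non-issue: replacement is in place, so at the $k$-th step of Phase 2 you aim the user at whichever canonical ball currently contains driver $k$, as the paper does, and the bound does not depend on the permutation.

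The genuine gap is Phase 1, which is where the content of the lemma lies. You need a lower bound, uniform over all configurations that can occur, on the $Q$-mass of the user positions whose nearest driver is one that still has to be moved. Your primary plan does not produce it. The hierarchy $r_1\gg\dots\gg r_N\gg\delta$ and the ``farthest unplaced driver'' rule control nothing, because the initial drivers are arbitrary and can lie inside, or arbitrarily close to, the placed balls whatever their radii. If all initial drivers sit at $z_1$, then after one step every driver lies in $U_2=B(z_1,r/2)$, so your main rule has no admissible driver and everything rests on the exceptional clause. The ``pigeonhole and separation argument'' invoked there is precisely the missing step.

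The paper fills this gap with two ideas absent from your sketch.
\begin{itemize}
\item It tracks occupancy rather than identities. A target set $B_k$ is ``achieved'' when it holds exactly one driver, and it suffices that the user be assigned to \emph{any} non-achieved driver. So a spare driver sitting inside an occupied target ball is harmless.
\item It shows that the Voronoi cells of the non-achieved drivers have total measure at least $\min\{\alpha(\delta/4),v\}$, where $v$ is the volume of a half-ball of radius $\delta/4$. If a non-achieved driver $s_o$ is at distance at least $\delta/2$ from every achieved set, then $B(s_o,\delta/4)\cap X$ lies in its cell. Otherwise there is a unique nearby achieved driver $s_k\neq s_o$, and the half-ball $\{x\in B(s_o,\delta/4):(x-s_o)\cdot(s_k-s_o)\le 0\}$ lies on $s_o$'s side of the bisector, however close $s_k$ is.
\end{itemize}

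Your fallback can also be made rigorous, and it would then be simpler than the paper's route. It needs an argument you did not give:
\begin{itemize}
\item Fix $2N$ candidate points in the interior of $X$, pairwise at distance at least $3m$ and far from the balls $B(c_k,\eta)$.
\item Each initial driver is within $m$ of at most one candidate, so at least $N$ candidates are at distance at least $m$ from every initial driver. Use these as the Phase 1 targets, with $r\le m/4$.
\item Any user within $m/4$ of a not-yet-replaced driver is then at distance at most $m/4$ from it and at least $m/2$ from every placed driver. So it is assigned to a not-yet-replaced driver.
\item This gives the factor $(\rho\,\alpha(m/4))^N$, times $(\rho\,\alpha(r))^N$ for the new drivers, uniformly in $s$.
\end{itemize}
Appealing to ``a finite cover of the compact set $X^N$'' without such a local-uniformity argument leaves Phase 1 unproved.
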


\begin{proof}
We first argue that it suffices to consider the case where $C$ is a product $A_1 \times \ldots \times A_N$ of disjoint closed subsets $A_1,\ldots,A_N$ of $X$.
Indeed, by Lebesgue's density theorem almost every point in $C\setminus D$ has density 1 so that taking such a point $x \in C\setminus D$ and any $\kappa\in (0,1)$ we can find a small radius $r>0$ such that the $\|\cdot\|_\infty$-ball $R=\{s\in X^N:\|s-x\|_\infty\leq r\}$ satisfies 
$\mu_N(C \cap R) \geq \kappa\, \mu_N(R)$. Observe that $R$ is a product of rectangles $R=R_1 \times \ldots \times R_N$ and since $x\not\in D$ we can choose $r$ small enough so that the $R_k$'s are pairwise disjoint.
Setting $A_i=R_i\cap X$ it follows that the product set $\tilde C\triangleq A_1\times\ldots\times A_N$ 
is a Borel subset of $X^N\setminus D$, while
 the regularity of $X$ implies that $\mu_N(\tilde C)=\prod_{i=1}^N\mu(A_i)>0$.
Moreover, since $P_N$ and $Q_N$ have a strictly positive density it follows that  there exists some $\gamma>0$
such that
$\PP(S^{2N+1} \in C | S^1=s)\geq \gamma\; \PP(S^{2N+1} \in \tilde C | S^1=s)$. Hence, it suffices to find a uniform lower bound for the right hand side, that is,  for  sets  of product form $A_1\times\ldots\times A_N$
for pairwise disjoint $A_k$'s  with $\mu(A_k)>0$ and therefore $P(A_k)>0$ as well as $Q(A_k)>0$.

 Now, considering a potentially smaller radius $r>0$, we can further assume that there is a second family of pairwise disjoint closed subsets $B_1,\ldots,B_N$ included in the interior of $X$, which are disjoint from all the $A_k$'s and such that $\mu(B_k)>0$ (so that
$P(B_k)>0$ and $Q(B_k)>0$). Without loss of generality we may assume that the sets $B_k$ have a small diameter $\delta>0$ with $\delta$  smaller than the distance between any two of these sets $B_k$ and also smaller than the distance between these sets and  the boundary of $X$.
With this reduction we can assume from now on that $C$ is of the form $C=A_1 \times \ldots \times A_N$ with all these properties.
The rest of the proof consist in two steps: first, we show that starting from any initial state $s\in X^N$, after $N$ periods there is a uniformly positive probability that the chain reaches an intermediate state $S^{N+1}$ in which every $B_k$ contains exactly one driver. Then after $N$ additional periods we show that with a uniformly positive probability the chain reaches a state 
$S^{2N+1}\in A_1\times\ldots\times A_N$.

\noindent{\bf First step.} 
We claim that there exists $\varepsilon_1>0$, independent of the initial state $S^1=s$, such that with probability at least $\varepsilon_1$, after $N$ steps the drivers are relocated so that each $B_k$ contains exactly one driver.

Let $s\in X^N$ be an arbitrary initial state and let $\eta_k^1$ be the number of drivers in $B_k$. Denote $\eta^1_{N+1}=N-\sum_{k=1}^N\eta_k^1$ the drivers located in $X\setminus\cup_{k=1}^NB_k$. The superindex represents the time step $t=1$, as these values will evolve. We will show that there exists $\varepsilon_1'=\varepsilon_1'(\eta^1)$ that only depends on $\eta^1$ such that with probability at least
$\varepsilon_1'$ after $N$ steps
every set $B_k$ contains exactly one driver.
Since there is a finite number of possible configurations for $\eta^1$, by taking $\varepsilon_1$ as the minimum of these $\varepsilon_1'$ this property will hold uniformly for $s\in X^N$.  

In order to find  $\varepsilon_1'$ as required, we relocate the drivers one at a time  until we have $\eta^{N+1}_k=1$ for all $k=1,\ldots,N$ and $\eta^{N+1}_{N+1}=0$. In every iteration $t$, we have some sets $B_k$ that have exactly one driver. We call these the ``achieved'' sets with ``achieved'' drivers. When this happens for every $k=1,...,N$ we have 
completed our task. If this is not yet the case, we take any $j$ such that $\eta^t_j=0$. We want to remove a driver that is not  achieved, and replace it by a new driver located within $B_j$. We now prove that this occurs with a probability $\varepsilon_1''>0$ that 
only depends on the family of currently achieved sets $B_k$. Since there are finitely many combinations for such achieved sets, it then suffices to define $\varepsilon_1'$ as the product of all such $\varepsilon_1''$. 

Let $p=\min_{k}P(B_k)$ so that the probability that the new driver appears in any $B_j$ is at least $p$. Notice that this $p$ is strictly positive and does not depend on $s$ nor $\eta^1$. What about the probability of selecting a driver that is not achieved? It suffices to show that there exists $v_1>0$ such that the Lebesgue measure of the Voronoi regions covered by non-achieved drivers  is larger than $v_1$, since then the probability of the user being assigned to a non-achieved driver is at least $\rho\,v_1$ and we may then take $\varepsilon_1''=p\,\rho\,v_1$. In order to find an appropriate $v_1$ let us set $r=\delta/4$ and let $v$ be the $d$-volume of a half ball of radius $r$. We claim that we can take $v_1=\min\{\alpha(r),v\}$. In order to prove this, it suffices to 
 consider the case where there is a single non-achieved driver $s_o$ (if there are more, the joint volume of the Voronoi cells of non-achieved drivers would be larger). We distinguish two cases for $s_o$.
 \vspace{-2ex}
 
\begin{itemize}
    \item \textbf{Case 1:} If for every achieved $B_k$ we have $\mathop{\rm dist}(s_o,B_k)\geq\delta/2$, then  $B(s_o,r)\cap X$ is included in the Voronoi cell of $s_o$ which then has  a Lebesgue measure  at least $\alpha(r)\geq v_1$.
     \vspace{-1ex}

    \item \textbf{Case 2:} There is some achieved $B_k$ such that $\mathop{\rm dist}(s_o,B_k)<\delta/2$. Note that the choice of $\delta$ implies that this $k$ is unique and also that 
    $B(s_o,r)
    \subseteq X$.
    Let $s_k$ be the position of the only driver in $B_k$.  The Voronoi partition that considers just  $s_o$ and $s_k$ is given by a hyperplane passing through the mid point of the segment connecting $s_k$ and $s_o$. Now, at least half of the ball 
    $B(s_o,r)$
    is included in the half-space containing $s_o$, and since all the other achieved sets $B_i$ are at a distance larger than $\delta/2$ from $s_o$, this half ball is  included in the Voronoi cell of $s_o$ which then has a Lebesgue measure  of at least $v\geq v_1$.
\end{itemize}
 \vspace{-2ex}

\noindent Both cases combined show that regardless of the position of $s_o$, the described $\varepsilon''_1$ works. This concludes the first step: there is a lower bound $\varepsilon_1$, independent of $s$, such that $P(\eta^{N+1}_1=1,\ldots,\eta^{N+1}_N=1|S^1=s)\geq\varepsilon_1$.

\vspace{1ex}
\noindent{\bf Second step.} We now prove that the probability of moving from an arbitrary state $s$  having exactly one driver in each $B_k$ to a state $\tilde s\in A_1\times\ldots\times A_N$, also has a uniform lower bound $\varepsilon_2$ independent of $s$. 
Let $q>0$ be such that $Q(B_k)\geq q$ and $P(A_k)\geq q$ for all $k=1,\ldots,N$. Consider the driver at position $s_1$ and let  $B_{j}$ the set that contains $s_1$. Then, with probability $Q(B_{j})>q$ an  arriving user will appear in this set $B_j$ and will be assigned to the driver $s_1$, while with probability $P(A_1)\geq q$ the new driver will appear in a position $\tilde s_1\in A_1$.
Hence, with probability $q^2$ the state $s$ will transition to a new state with the first driver located in $A_1$. Proceeding sequentially by replacing the drivers at $s_2,s_3,\ldots$ with new drivers at $\tilde s_2\in A_2$, $\tilde s_3\in A_3, \ldots$
it follows that  after $N$ periods the state $s$ will transition to a state in $A_1\times\ldots\times A_N$, with probability at least $q^{2N}$.
\\[2ex]
Combining the first and second steps it follows that after $2N$ periods the state $S^{2N+1}$ has a probability at least $\varepsilon=\varepsilon_1 \varepsilon_2$ to belong to the target set $C=A_1\times\ldots\times A_N$, uniformly in the initial state $S^1=s$.
\end{proof}

\begin{corollary}
\label{Cor:TechnicalAppendix}
    Let $C\in \beta(X^N)$ be any  Borel set with  positive Lebesgue measure $\mu_N(C)>0$. Then, there exists $\varepsilon>0$, which depends only on $C$, such that  $\PP(S^{t}\!\in C) \geq \varepsilon$  for  all $t\ge 2N+1$, regardless of the starting state $S^1$.
\end{corollary}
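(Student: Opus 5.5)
The plan is to reduce the statement to Lemma \ref{Lemma:TechnicalAppendix} by conditioning on the state of the chain $2N$ steps earlier and using the Markov property. Fix $C$ with $\mu_N(C)>0$, and let $\varepsilon>0$ be the constant given by Lemma \ref{Lemma:TechnicalAppendix}. That constant depends only on $C$ and bounds $\PP(S^{2N+1}\in C\mid S^1=s)$ from below uniformly in $s\in X^N$.

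Since the chain is time-homogeneous, the same uniform bound holds for any block of $2N$ steps. For every $m\ge 1$ and every $s\in X^N$ we have $\PP(S^{m+2N}\in C\mid S^m=s)=\PP(S^{2N+1}\in C\mid S^1=s)\geq\varepsilon$, because the $2N$-step transition kernel $P^{2N}(s,C)$ does not depend on the starting time.

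Now take any $t\ge 2N+1$ and set $m=t-2N\ge 1$. Let $\nu_m$ denote the law of $S^m$, which may be anything since the initial state $S^1$ is arbitrary (deterministic or random). By the Markov property,
\begin{equation*}
\PP(S^t\in C)=\int_{X^N} P^{2N}(s,C)\,\nu_m(ds)\geq \varepsilon\int_{X^N}\nu_m(ds)=\varepsilon .
\end{equation*}
This gives the claim with the same $\varepsilon$, and the bound is independent of both $t$ and $S^1$.

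The only point needing care is measurability of $s\mapsto P^{2N}(s,C)$, which is needed for the integral to make sense. This follows from the kernel being a Borel transition kernel: greedy assignment is a Borel-measurable map, with ties on a null set handled by a fixed measurable tie-breaking rule. I expect no genuine obstacle; the uniformity in $s$ in Lemma \ref{Lemma:TechnicalAppendix} does all the work.
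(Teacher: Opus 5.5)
Your proposal is correct and follows essentially the same route as the paper's proof. You condition on $S^{t-2N}$, use time-homogeneity to reduce to the $2N$-step kernel, and integrate the uniform lower bound from Lemma~\ref{Lemma:TechnicalAppendix}; your explicit use of the law $\nu_m$ of $S^{t-2N}$ and the measurability remark just make the paper's shorthand slightly more precise.
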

\begin{proof}
 By conditioning on the state $S^{t-2N}$ visited at stage $t-2N$ we have
$\mathbb{P}(S^t\!\in C)=\mathbb{E}\big[\mathbb{P}(S^t\!\in C|S^{t-2N})\big]$. Since the
chain is homogeneous, this is equal to
$\mathbb{E}\big[\mathbb{P}(S^{2N+1}\!\in C|S^{1})\big]$ and we conclude by  Lemma \ref{Lemma:TechnicalAppendix}.
\end{proof}
Using these results
we may now proceed to establish the conditions 
that ensure the existence of a unique invariant measure and the convergence in total variation of the $t$-step distribution of the chain.
\\[1ex]
{\bf Condition a).} 
In order to check
$\varphi$-irreducibility we may just take
$\varphi=\mu_N$ since  Lemma \ref{Lemma:TechnicalAppendix} ensures that for  every initial point $s \in X^N$ and any Borel set $C$ with strictly positive Lebesgue measure $\mu_N(C)>0$  the chain has a strictly positive probability of reaching $C$ at stage $t=2N+1$.
\\[1ex]
{\bf Condition b).} 
  Lemma \ref{Lemma:TechnicalAppendix} implies that $\mathbb{P}(\Gamma_C=\infty)=1$ for all $C$ with positive Lebesgue measure $\mu_N(C)>0$. However, Harris recurrence needs this to be true for any set with $\psi(C)>0$ where $\psi$ is a maximal measure as defined in Proposition 4.2.2 in \cite{meyn2012markov}. Now,  since the chain is also irreducible with respect to $\psi$ (as stated in the  Proposition 4.2.2 just cited), every set $C$ with $\psi(C)>0$ satisfies $\mu_N(C)>0$. Indeed, if $\mu_N(C)=0$, and since the probabilities $P_N$ and $Q_N$ have densities with respect to $\mu_N$,  the probability for the drivers to reach $C$ would be zero for every $t$, implying that $\psi(C)=0$. Hence, Lemma \ref{Lemma:TechnicalAppendix} is indeed sufficient to ensure that the chain is Harris recurrent.
\\[1ex]
{\bf Condition c).} In any $m$-cycle $C_1,\ldots, C_m$ the sets $C_i$ are only visited at regular intervals of length $m$. Now, since $P_N$ and $Q_N$ are absolutely continuous with respect to the Lebesgue measure we have $\mu_N(C_i)>0$ and then Corollary \ref{Cor:TechnicalAppendix} shows that $C_i$ can be reached with positive probability at every stage $t\geq 2N+1$. This  necessarily implies that $m=1$.
\\[1ex]
{\bf Condition d).} To prove the existence of an invariant probability we use Theorem 3.1 in \cite{lasserre2000invariant}.  
Since the set $D=\{s \in X^N: \exists i\neq j, s_i=s_j\}$ is such that $\mu_N(D)=0$, and $P$ as well as $Q$ have a density with respect to the Lebesgue measure, the probability that the chain jumps in one step from a state $s\in X^N\setminus D$ towards $D$ is zero, while Lemma \ref{Lemma:TechnicalAppendix}
gives $\mathbb{P}(S^{2N+1}\not\in D|S^1=s)>0$ for all $s\in D$. On the other hand, we claim that the weak Feller property holds at every point $s\not\in D$, namely, for every continuous function $f\in\mathcal{C}(X^N)$ and each $s\in X^N$, by considering the Voronoi cells
$V^i(s)=\{x\in X:\|x-s_i\|_2\leq \|x-s_j\|_2\mbox{ for all }j\neq i\}$ for the Euclidean norm $\|\cdot\|_2$, we have $$\mathbb{P}f(s)\triangleq \int_{X^N} f(x) \,P(s,dx)=\sum_{i=1}^N Q(V^i(s))\int_X f(s_1,\ldots,s_{i-1},x_i,s_{i+1},\ldots,s_N)\,dP(x_i).$$
Below we prove that for each $i=1,\ldots,N$ the function $s\mapsto Q(V^i(s))$ is continuous at every $s\in X^N\setminus D$, so that the same holds for the 
map $s\mapsto \mathbb{P}f(s)$.
Finally, take  $f_0:X^N\to [0,1]$ a continuous function that vanishes exactly on $D$, that is $\{s\in X^N:f_0(s)=0\}=D$.
Take $C$ a compact subset of $X^N\setminus D$ with $\mu_N(C)>0$, and let $m=\min_{s\in C}f_0(s)$ so that $m>0$.
For an arbitrary initial state $s_0\in X^N$ we have $\mathbb{E}_{s_0}[f_0(S^t)]\geq m\,\mathbb{P}(S^t\in C)$ and then Corollary \ref{Cor:TechnicalAppendix} implies 
$$\limsup_{n\to\infty} \mbox{ $\mathbb{E}_{s_0}[\frac{1}{n}\sum_{t=0}^{n-1}f_0(S^t)]$}\geq m\,\varepsilon>0.$$
These conditions allow us to invoke Theorem 3.1 in \cite{lasserre2000invariant}
which ensures that the chain has an invariant probability measure $\pi$, establishing condition d).

\vspace{1ex}
\noindent{\bf Continuity of $s\mapsto Q(V_i(s))$.} Consider two $N$-tuples of drivers $s,r \in X^N\setminus D$ and let  $\delta=\|s-r\|$. Consider a point $x \in X$ such that $x \in V_i(s)\setminus V_i(r)$, that is, a user at $x$ would be assigned to the $i$-th driver when the drivers are located according to $s$ but to a different driver $j\neq i$ under $r$, so that
$\|x - r_j\|\leq\|x - r_i\|$ while 
$$\|x - r_i\| \leq \|x - s_i\| + \|s_i - r_i\|\leq\|x - s_j\| + \delta \leq\|x - r_j\| + \|r_j - s_j\| + \delta \leq\|x - r_j\| + 2 \delta.
$$
It follows that $V_i(s)\setminus V_i(r)\subseteq\cup_{j\neq i}\,\Delta_{ij}^\delta(r)$ where
$\Delta_{ij}^\delta(r)\triangleq \{x\in X:0\leq \|x-r_i\|-\|x-r_j\|\leq 2\delta\}$. These regions decrease with $\delta$ and, because $r_j\neq r_i$, for $\delta=0$ they become the intersection of $X$ with a hyperplane through the midpoint between $r_i$ and $r_j$.
Since $X$ is bounded, it follows that $\mu(\Delta_{ij}^\delta(r))\to 0$ as $\delta\to 0$. Then, 
the Lebesgue measure of the symmetric difference $V_i(s)\!\vartriangle\! V_i(r)$ of the Voronoi cells, and {\em a fortiori} the probabilities $Q(V_i(s)\!\vartriangle\! V_i(r))$, tend to 0 
when $\delta$ tends to 0.
Since $|Q(V_i(r))-Q(V_i(s))|\leq Q(V_i(s)\!\vartriangle \! V_i(r))$, we conclude that for any fixed tuple $s\in X^N\setminus D$ and an arbitrary sequence $r_n\to s$, we have $Q(V_i(r_n))\to Q(V_i(s))$.

 \paragraph{Conclusion:} Having proved that the conditions a)-d) hold  for any continuous ground set $X$ and probabilities $P$ and $Q$ that satisfy the stated regularity assumptions, Theorem 13.3.3 in \cite{meyn2012markov} implies the convergence of the chain in total variation  towards the unique invariant measure $\pi$. It follows that Lemma \ref{Lemma:Ergodic}
and Theorem \ref{thm:CostsConverge} in section 3 remain valid under these conditions.
\qed

\subsection*{Proof of Lemma \ref{Lemma:ChangeInIV}}

\begin{proof}
We compute the expected value $\Delta V(\ell)$ by conditioning on which intervals are merged and where the new driver falls. Namely, denote $M_k$ the event in which the new user appears in a position that produces the merging of $I_k$ and $I_{k+1}$, which has probability $P(M_k)=(\ell_k+\ell_{k+1})/2$, and let  $D_j$ the event where the new driver appears in the interval $I_j$ so that $P(D_j)=\ell_j$. We distinguish the cases
$j\in\{k,k+1\}$ and $j\not\in \{k,k+1\}$, by considering separately the events $A_k=M_k\cap(D_k\cup D_{k+1})$ and $A_{kj}=M_k\cap D_j$ for  $j\not\in \{k,k+1\}$. 
Then, using the law of total expectation we have
\begin{equation} \label{eq:SummaryThm1}
    \Delta V(\ell) = \sum_{k=1}^N \mathbb{E}(\Delta V(\ell)|A_k) \cdot  P(A_k) + 
    \sum_{k=1}^N\sum_{j \notin \{k,k+1\}} \mathbb{E}(\Delta V(\ell)|A_{kj}) \cdot
    P(A_{kj}).
\end{equation}

Since the arrivals of users and drivers are independent, we have $P(A_k)=\frac{(\ell_k+\ell_{k+1})}{2}(\ell_k+\ell_{k+1})$ and $P(A_{kj})=\frac{(\ell_k+\ell_{k+1})}{2}\ell_j$.
Hence, it remains to compute the conditional expectations appearing in this formula. To this end, let us suppose that we are in event $M_k$ with $I_k,I_{k+1}$ the two intervals that were merged, and let $I_M=I_k \cup I_{k+1}$ the merged interval, with respective lengths  $\ell_k,\ell_{k+1},\ell_k+\ell_{k+1}$. Consider the two cases.

\paragraph{Case 1: the new driver appears within the merged interval (event $A_k$).} Before we had two intervals of length $\ell_k,\ell_{k+1}$, and now they changed to $z,\ell_k+\ell_{k+1}-z$ for some $z\in[0,\ell_k+\ell_{k+1}]$. Since the remaining $\ell_j$'s are unchanged, the change in $V$ only depends on the difference these two terms, which is
\begin{equation} \label{Eq:NewDriverInsideMergedInterval1}
 DV_1(\ell_k,\ell_{k+1},z)=   z^2 + (\ell_k+\ell_{k+1}-z)^2 - [\ell_k^2 + \ell_{k+1}^2] = 2[z^2+\ell_k\ell_{k+1}-z(\ell_k+\ell_{k+1})].
\end{equation}
From this equation  we can see that:

\begin{itemize}
    \item If $\ell_k=0$ or $\ell_{k+1}=0$, the change is zero or negative. This is sensible, as $\ell_k=0$ means that two drivers were in the same place, and now they have been separated (or there was no change).
    \item If $z=\frac{\ell_k+\ell_{k+1}}{2}$, the change is zero or negative. This is sensible because the merged interval is now perfectly divided.
    \item If $\ell_k=\ell_{k+1}$, the change is zero or positive. The reason is the same as in the previous bullet point: the interval used to be perfectly divided.
\end{itemize}
Given that users arrivals are uniform on $[0,1]$, we have that $z$ is also uniform in $[0,\ell_k+\ell_{k+1}]$. Hence, the expected value of the change conditional on $M_k$ and  the new driver appearing inside $I_k\cup I_{k+1}$ is
\begin{equation}\label{Eq:SummaryChange1}
\mathbb{E}(\Delta V(\ell)|A_k)=  \frac{1}{\ell_k+\ell_{k+1}} \int_0^{\ell_k+\ell_{k+1}} 2[z^2+\ell_k\ell_{k+1}-z(\ell_k+\ell_{k+1})] dz = \frac{1}{3}[4\ell_k\ell_{k+1}-\ell_k^2-\ell_{k+1}^2].
\end{equation}

\paragraph{Case 2: the new driver appears outside the merged interval (events $A_{kj}$).}
Let us denote by $\ell_j$ the length of the interval divided by the new driver into two shorter intervals of length $z, \ell_j-z$ respectively. The change in $V$ is now 
\begin{equation}\label{Eq:NewDriverNOTInsideMergedInterval1}
    DV_2(\ell_k,\ell_{k+1},\ell_j,z)=(\ell_k+\ell_{k+1})^2+(l_j-z)^2+z^2 - [\ell_k^2+\ell_{k+1}^2+\ell_j^2] = 2[z^2-z\ell_j+\ell_k\ell_{k+1}].
\end{equation}
The analysis of Eq. \eqref{Eq:NewDriverNOTInsideMergedInterval1} reveals that:
\begin{itemize}
    \item If $\ell_j \approx 0$ (implying that $z\approx 0$ as well), the change is positive, which is sensible because we are dividing an interval that was already small.
    \item If $z\approx 0$, the change is positive, which happens because the new driver appeared very close to a previous one, contribution almost nothing to the coverage of the segment.
    \item If $\ell_k \approx 0$ or $\ell_{k+1} \approx 0$, the change is negative. This occurs because the there were two drivers very close to each other and one of them was replaced by someone in a different position, improving the overall situation.
\end{itemize}
The conditional expected change in this case is given by 
\begin{equation}\label{Eq:SummaryChange2}
    \mathbb{E}(\Delta V(\ell)|C_{kj})=\frac{1}{\ell_j}\int_0^{\ell_j} 2[z^2-z\ell_j+\ell_k\ell_{k+1}] dz= 2\left[\ell_k\ell_{k+1}-\frac{\ell_j}{6} \right]
\end{equation}

\noindent The result now follows directly by substituting \eqref{Eq:SummaryChange1} and \eqref{Eq:SummaryChange2} and the corresponding probabilities in \eqref{eq:SummaryThm1}.
\end{proof}

\subsection*{Proof of Theorem \ref{thm:IGDWorseThanUniform}}
Let us start analysing (*1) from Eq. \eqref{ChangeInV}. To do this, we will show that actually the expected value of Eq. \eqref{Eq:SummaryChange1} is equal to zero, which suffices because then the first term in Eq. \eqref{eq:SummaryThm1} nullifies. We omite the multiplying $1/3$ as it does not affect the zero result.
$$E_\ell([4\ell_k\ell_{k+1}-\ell_k^2-\ell_{k+1}^2])=\int_0^1 E([4\ell_k\ell_{k+1}-\ell_k^2-\ell_{k+1}^2]|\ell_k+\ell_{k+1}=L)\cdot f_{\ell_k+\ell_{k+1}}(L)dL$$
where $f_X$ stands for the pdf of $X$. Note that this proof assumes the continuous case, while the discrete case is analogous replacing the integral by a sum. We now show that every term in this integral is zero. In fact,
\begin{align*}
  E([4\ell_k\ell_{k+1}-\ell_k^2-\ell_{k+1}^2]|\ell_k+\ell_{k+1}=L)&=E([4\ell_k(L-\ell_k)-\ell_k^2-(L-\ell_k)^2]|\ell_k+\ell_{k+1}=L) \\
  &=6LE(\ell_k|\ell_k+\ell_{k+1}=L)-L^2-6E(\ell_k^2|\ell_k+\ell_{k+1}=L).
\end{align*}
Given $\ell_k+\ell_{k+1}=L$, we have that $\ell_k$ is uniformly distributed in $[0,L]$. Therefore, $E(\ell_k|\ell_k+\ell_{k+1}=L)=L/2$, and $E(\ell_k^2|\ell_k+\ell_{k+1}=L)=L^2/3$, so putting everything together, the term inside the integral becomes
$$\mbox{$6\frac{L^2}{2}-L^2-6\frac{L^2}{3}=0$}.$$

We now show (*2) is strictly positive. Note that we can shift the vector without affecting the result, so we assume that the first driver is located at zero. We then have $n-1$ uniform variables in $[0,1]$, so the interval vector $(\ell_1,\ldots,\ell_N)$ follows a Dirichlet distribution\footnote{We use here well-known facts about the distribution of the intervals that follows a random partition of the unit interval. The original source is \citep{renyi1953theory}, but there are numerous lecture notes and links that describe them as well.}  with parameter $(1,\ldots,1)$, where the vector has length $N$. In that case, it is also an established fact that
$$E(\mbox{$\prod$}_{i=1}^N\ell_i^{\beta_i})=\frac{\Gamma(N)}{\Gamma(\sum_i(1+\beta_i))}\cdot\mbox{$\prod$}_{i=1}^N \Gamma(1+\beta_i).$$
Note that in the expression above $\beta$ is general, but for integers, we have that $\Gamma(k)=(k-1)!$. So in our case we have to compute
$E(\ell_k^2\ell_{k+1}\ell_j-\ell_k\ell_j^3/6)$.
Using the general formula above, we get
$$
\mbox{$E(\ell_k^2\ell_{k+1}\ell_j)=\frac{\Gamma(N)}{(N+3)!}\cdot 2!$}$$
$$
\mbox{$E(\ell_k\ell_j^3/6)=\frac{\Gamma(N)}{(N+3)!}\cdot 3! \cdot \frac{1}{6}.$}
$$
As $\frac{3!}{6}=1<2!$, we conclude that $E(\ell_k^2\ell_{k+1}\ell_j-\ell_k\ell_j^3/6)>0$, i.e., $(*2)$ is positive completing the proof.
\qed

\end{document}